\documentclass[12pt]{amsart}
\usepackage[active]{srcltx}
\usepackage{amscd,amssymb,amsopn,amsmath,amsthm,mathrsfs,graphics,amsfonts,enumerate,verbatim,calc
}
\usepackage{calc,amssymb,amsthm,amsmath,amscd, eucal,ulem}
\usepackage{alltt}
\usepackage[left=1.35in,top=1.25in,right=1.35in,bottom=1.25in]{geometry}
\RequirePackage[dvipsnames,usenames]{color}

\normalem
\input{kmacros3.sty}
\input{xy}
\xyoption{all}
\usepackage{tikz}

\numberwithin{equation}{theorem}

\newcommand{\D}{\displaystyle}

\renewcommand{\m}{\mathfrak{m}}

\DeclareMathOperator{\depth}{depth}

\usepackage{fullpage}

\usepackage{setspace}
\usepackage{hyperref}

\usepackage{enumerate}

\usepackage{graphicx}

\usepackage[all,cmtip]{xy}
%
%
%
%
%
%
%

\usepackage{verbatim}

\theoremstyle{theorem}



\begin{document}
\title{$F$-injectivity and Buchsbaum singularities}
\author{Linquan Ma}
\address{Department of Mathematics\\ University of Michigan\\ Ann Arbor\\ Michigan 48109}
\email{lquanma@umich.edu}
\maketitle
\begin{abstract}
Let $(R,\m, K)$ be a local ring that contains a field. We show that, when $R$ has equal characteristic $p>0$ and when $H_\m^i(R)$ has finite length for all $i<\dim R$, then $R$ is $F$-injective if and only if every ideal generated by a system of parameters is Frobenius closed. As a corollary, we show that such an $R$ is in fact a Buchsbaum ring. This answers positively a question of S. Takagi that $F$-injective singularities with isolated non-Cohen-Macaulay locus are Buchsbaum. We also study the characteristic $0$ analogue of this question and we show that Du Bois singularities with isolated non-Cohen-Macaulay locus are Buchsbaum in the graded case.
\end{abstract}

\section{Introduction}

The concept of $F$-injective rings was first introduced in \cite{FedderFPureRationalsingularity} in the early 1980s. This class of rings naturally arises when one studies the Frobenius actions on the local cohomology supported at the maximal ideal, and is a natural generalization of $F$-pure rings. There is a notion of Frobenius closure for ideals (see Section 2 for detailed definition) that has close connections with $F$-pure and $F$-injective singularities. In general, the Frobenius closure is very small and is always contained in the tight closure (we refer to \cite{HochsterHunekeTC1} for basic tight closure theory). And, quite similar to the tight closure characterizations of $F$-regularity and $F$-rationality, it is well known that, under mild conditions on $R$, the $F$-purity of $R$ is the same as the condition that every ideal be Frobenius closed. If we assume $R$ is Cohen-Macaulay, then the $F$-injectivity of $R$ is equivalent to the condition that every ideal generated by a system of parameters be Frobenius closed, and also equivalent to the condition that a single ideal generated by a system of parameters be Frobenius closed.

However, when $R$ is not assumed to be Cohen-Macaulay, the relation between $F$-injectivity and Frobenius closure is not clear. The first goal of this paper is to explore the connections between these conditions without the Cohen-Macaulay assumption. Our first main theorem is the following:
\begin{theorem}
\label{main theorem}
Let $(R,\m)$ be a local ring of equal characteristic $p>0$. Suppose $H_\m^i(R)$ has finite length for each $i<\dim R$. Then the following are equivalent:
\begin{enumerate}
\item $R$ is $F$-injective.
\item Every ideal generated by a system of parameters is Frobenius closed.
\end{enumerate}
\end{theorem}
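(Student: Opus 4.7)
My plan is to prove the two implications separately. The finite-length assumption on $H_\m^i(R)$ for $i<d$ provides an integer $N$ with $\m^N H_\m^i(R)=0$ for all $i<d$. I will call a system of parameters lying in $\m^N$ \emph{standard}; such sops are cofinal among all sops.

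For the direction $(2)\Rightarrow(1)$ in top degree $i=d$, I plan to take an arbitrary element $\xi=[a/(x_1\cdots x_d)^t]\in H_\m^d(R)$ with $F(\xi)=0$. By definition there exists $s\geq pt$ with $a^p(x_1\cdots x_d)^{s-pt}\in(x_1^s,\ldots,x_d^s)$, and by enlarging $s$ to a multiple $pt'$ of $p$ (the vanishing condition propagates in the colimit), this becomes $(a(x_1\cdots x_d)^{t'-t})^p\in(x_1^{t'},\ldots,x_d^{t'})^{[p]}$; Frobenius closure of the parameter ideal $(x_1^{t'},\ldots,x_d^{t'})$ then collapses $\xi$ to $0$. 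For lower $i$, I induct on $d$: pick a standard parameter $x_1\in\m^N$ so that $x_1$ annihilates each $H_\m^j(R)$ for $j<d$; the long exact sequence attached to multiplication by $x_1$ on $R$ splits into
\[0\to H_\m^{j-1}(R)\to H_\m^{j-1}(R/x_1R)\to H_\m^j(R)\to 0,\quad 1\leq j\leq d-1,\]
with a similar piece at $j=d$. Finite length of the lower $H_\m^i(R/x_1R)$ is inherited from these sequences, and Frobenius closure of parameter ideals should descend to $R/x_1R$; by induction $R/x_1R$ is then $F$-injective, and a diagram chase on the above sequences transfers injectivity of $F$ to each $H_\m^j(R)$ for $j<d$.

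For $(1)\Rightarrow(2)$: given $a^{p^e}\in(x_1,\ldots,x_d)^{[p^e]}$, the image $\xi$ of $a$ in $H_\m^d(R)$ via the canonical map $R/(x_1,\ldots,x_d)\to H_\m^d(R)$ satisfies $F^e(\xi)=0$, so by $F$-injectivity $\xi=0$; i.e.\ $a$ lies in the limit closure of $(x_1,\ldots,x_d)$. To improve this to $a\in(x_1,\ldots,x_d)$, I plan to invoke the generalized Cohen--Macaulay hypothesis: the limit-closure defect $(x_1,\ldots,x_d)^{\lim}/(x_1,\ldots,x_d)$ has finite length, is controlled by the lower $H_\m^i(R)$ on which $F$-injectivity is also in force, and an iterative use of Frobenius should contract $a$ into the ideal.

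I expect the main obstacle to be a pair of Frobenius compatibilities in the $(2)\Rightarrow(1)$ induction. First, Frobenius closure on $R$ does not immediately descend to $R/x_1R$: a witness $a^{p^e}\in(x_1,y_2^{p^e},\ldots,y_d^{p^e})$ to Frobenius closure of $(\bar y_2,\ldots,\bar y_d)$ in the quotient lives in an ideal strictly larger than $(x_1,y_2,\ldots,y_d)^{[p^e]}$, and recovering $a\in(x_1,y_2,\ldots,y_d)$ will require the standard-sop colon-capturing. Second, the Frobenius action does not commute with multiplication by $x_1$ (since $F(x_1)=x_1^p$), so the splitting of the long exact sequences above will be Frobenius-equivariant only after passing to the colimit over successive $p$-powers of $x_1$. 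Carrying out the resulting diagram chase to produce genuine injectivity (rather than a weaker annihilation statement) is where the bulk of the technical work will lie.
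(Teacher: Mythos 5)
Your proposal is a genuinely different route from the paper's in both directions, but both directions contain gaps serious enough that I do not believe the plan succeeds as sketched.

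For $(2)\Rightarrow(1)$, the top-degree and depth-degree arguments are fine (they are essentially the paper's Lemma \ref{lemma on Frobenius action on top local cohomology}). The difficulty is the inductive step: you need condition (2) to descend from $R$ to $R/x_1R$, and the phrase ``standard-sop colon-capturing'' does not accomplish this. Concretely, from $a^q\in(x_1,y_2^q,\ldots,y_d^q)$ and Frobenius closure of parameter ideals of $R$, the only clean conclusion is that $(ax_1)^q\in(x_1,y_2,\ldots,y_d)^{[q]}$, hence $ax_1\in(x_1,y_2,\ldots,y_d)$ --- which is vacuous. The equality $(y_2,\ldots,y_d):x_1=(y_2,\ldots,y_d):\m^N$ furnished by Theorem \ref{FLCandd-sequence} puts $a$ in $(x_1,y_2,\ldots,y_d):\m^N$ at best, not in the ideal itself. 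This is no accident: even for $R$ Cohen--Macaulay, ``parameter ideals of $R$ Frobenius closed'' ($=$ $F$-injectivity of $R$) does not pass to $R/x_1R$ for an arbitrary parameter $x_1$, and nothing in the choice $x_1\in\m^N$ forces it to. A further wrinkle you should note: in the Frobenius-equivariant long exact sequence for $0\to R\xrightarrow{x_1}R\to R/x_1R\to 0$, the left vertical map is $x_1^{p-1}F$, which \emph{vanishes} on the finite-length $H_\m^j(R)$; so the split sequences give you injectivity of $F$ on $H_\m^{j-1}(R)$ \emph{from} injectivity of $\bar F$ on $H_\m^{j-1}(R/x_1R)$, but the latter is the strictly stronger statement you still have to prove. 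The paper sidesteps all of this: from (2) $+$ finite local cohomology it gets Buchsbaumness (Proposition \ref{s.o.p Frobenius closed implies Buchsbaum}), then uses Schenzel's formula (\ref{3.6.7}) expressing each $H_\m^i(R)$ as an explicit quotient of colon ideals for a standard system of parameters, together with Goto's identity (\ref{3.6.8}), to reduce injectivity of Frobenius on $H_\m^i(R)$ directly to Frobenius closure of the parameter ideal $(x_1^k,\ldots,x_i^k)$; no induction on dimension is needed.

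For $(1)\Rightarrow(2)$, the step ``improve $a\in(x_1,\ldots,x_d)^{\lim}$ to $a\in(x_1,\ldots,x_d)$'' is the entire content of the implication, and ``an iterative use of Frobenius should contract $a$ into the ideal'' is not an argument: $F$-injectivity on the lower $H_\m^i(R)$ places no obvious constraint on the class of $a$ in the limit-closure defect $(x_1,\ldots,x_d)^{\lim}/(x_1,\ldots,x_d)$, and in a non--Cohen--Macaulay ring this defect can be nonzero while $F$ is injective on everything in sight. The paper's argument here is substantially more elaborate: after reducing to $R$ complete, $F$-finite and reduced via the $\Gamma$-construction, it shows that for $q>N$ both $R^{1/q}$ and $R^{1/q}/R$ are Buchsbaum modules over $A=K[[x_1,\ldots,x_n]]$ (using the short exact sequences in local cohomology produced by $F$-injectivity together with the surjectivity criterion, Theorem \ref{surjectivity criterion for Buchsbaum modules}); it then combines the Buchsbaum length-multiplicity equalities with the finite-local-cohomology inequality for $R$ to deduce $l(R/(\underline x))+l\bigl((R^{1/q}/R)/(\underline x)(R^{1/q}/R)\bigr)\leq l(R^{1/q}/(\underline x)R^{1/q})$, which forces the connecting map $\Tor_1^R(R/(\underline x),R^{1/q}/R)\to R/(\underline x)$ to vanish, i.e.\ $R/(\underline x)\hookrightarrow R/(\underline x^{[q]})$ for all $q>N$. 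That injection \emph{is} Frobenius closure. You should study this mechanism; the limit-closure reformulation is a correct first observation but is very far from the finish line.
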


In general, $F$-injective singularities are not necessarily Cohen-Macaulay, but in many cases, they are Buchsbaum. This is a natural weakening of Cohen-Macaulayness and, in a precise sense, the closest condition to being Cohen-Macaulay (see Section 2 for details). In \cite{SchenzelApplicationsOfDualizingComplexes}, Schenzel proved some homological criteria for Bushsbaum singularities. And, utilizing the results of Hochster and Roberts on the purity of the Frobenius map in \cite{HochsterRobertsFrobeniusLocalCohomology}, Schenzel also obtained some sufficient conditions for Buchsbaum rings in the graded case. In fact, results in \cite{SchenzelApplicationsOfDualizingComplexes} indicate that for $(R,\m)$ an $F$-injective graded ring, if $H_\m^i(R)$ has finite length for each $i<\dim R$, then $R$ is Buchsbaum. We note that, under some mild conditions on $R$, $H_\m^i(R)$ has finite length for each $i<\dim R$ if and only if $R$ is Cohen-Macaulay on the punctured spectrum (we give a detailed explanation of this in Section 2). So Schenzel's result is basically saying that $F$-injective singularities with isolated non-Cohen-Macaulay locus are Buchsbaum in the graded case. Takagi asked whether the same conclusion holds when $(R,\m)$ is a local ring:
\begin{question}[{\it cf.} Open Problem A.3 in \cite{KovacsandSchwedesurveyonlogcanonicalandDuBoissingulaities}]
\label{Takagi's question}
Suppose $(R,\m)$ is $F$-injective and $H_\m^i(R)$ has finite length for each $i<\dim R$. Then is $R$ a Buchsbaum ring?
\end{question}

This is supported by results of Goto and Ogawa in \cite{GotoOgawaAnoteonringswithFLC} when $R$ is $F$-pure. Using Theorem \ref{main theorem}, we provide a positive answer to this question.
\begin{corollary}
\label{main corollary}
Let $(R,\m)$ be a local ring of equal characteristic $p>0$. Suppose $R$ is $F$-injective and $H_\m^i(R)$ has finite length for each $i<\dim R$. Then $R$ is a Buchsbaum ring.
\end{corollary}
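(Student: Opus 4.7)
The plan is to reduce the Buchsbaum property to the standardness of every system of parameters, and then extract standardness from Theorem~\ref{main theorem} by a Frobenius-power argument. Theorem~\ref{main theorem} tells us that under our hypotheses every ideal generated by a full s.o.p.~is Frobenius closed. I would first upgrade this to partial parameter ideals: if $x_1,\dots,x_{i-1}$ is part of a s.o.p., then $(x_1,\dots,x_{i-1})$ is Frobenius closed as well. Given $y$ with $y^{p^e}\in(x_1,\dots,x_{i-1})^{[p^e]}$, extending to a full s.o.p.~$x_1,\dots,x_d$ places $y^{p^e}$ inside $(x_1,\dots,x_{i-1},x_i^N,\dots,x_d^N)^{[p^e]}$ for every $N$, so the hypothesis forces $y\in(x_1,\dots,x_{i-1},x_i^N,\dots,x_d^N)$ for every $N$, and intersecting over $N$ reduces to the Krull intersection theorem in $R/(x_1,\dots,x_{i-1})$.

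Since $H_\m^j(R)$ has finite length for $j<\dim R$, $R$ is generalized Cohen--Macaulay, and by St\"uckrad--Vogel's theory $R$ is Buchsbaum if and only if every s.o.p.~$x_1,\dots,x_d$ is standard, i.e.,
\[
(x_1,\dots,x_{i-1}):_R x_i = (x_1,\dots,x_{i-1}):_R \m \quad \text{for each } 1\le i\le d.
\]
The generalized Cohen--Macaulay hypothesis also provides a standard ideal: a positive integer $t$ such that every s.o.p.~contained in $\m^t$ is automatically standard. In particular, for any s.o.p.~$\underline{x}$ the Frobenius powers $\underline{x}^{p^e}=x_1^{p^e},\dots,x_d^{p^e}$ form a standard s.o.p.~once $e$ is large.

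The main step combines these two inputs. Given $u\in(x_1,\dots,x_{i-1}):_R x_i$ and $r\in\m$, raising $u x_i\in(x_1,\dots,x_{i-1})$ to the $p^e$-th power gives $u^{p^e}\in(x_1^{p^e},\dots,x_{i-1}^{p^e}):_R x_i^{p^e}$. For $e\gg 0$, standardness of $\underline{x}^{p^e}$ identifies this colon with $(x_1^{p^e},\dots,x_{i-1}^{p^e}):_R\m$, so multiplying by $r^{p^e}\in\m$ yields $(ur)^{p^e}\in(x_1,\dots,x_{i-1})^{[p^e]}$. The Frobenius closure of the partial parameter ideal $(x_1,\dots,x_{i-1})$ from the first step then forces $ur\in(x_1,\dots,x_{i-1})$. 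Since $r\in\m$ was arbitrary, $u\in(x_1,\dots,x_{i-1}):_R\m$, so $\underline{x}$ is standard and $R$ is Buchsbaum.

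The main obstacle is the interplay between Frobenius closure (available only for s.o.p.~ideals via Theorem~\ref{main theorem}) and standardness (available only after passing to a deep Frobenius power of the s.o.p.). The clean resolution is that standardness is applied to $\underline{x}^{p^e}$ while the Frobenius closure conclusion is drawn at the level of the partial parameter ideal $(x_1,\dots,x_{i-1})$; this mismatch is precisely why the upgrade of Frobenius closure to partial parameter ideals in the first step is essential to the argument.
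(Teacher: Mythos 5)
Your proof is correct and takes essentially the same approach as the paper, which (in Proposition \ref{s.o.p Frobenius closed implies Buchsbaum}) likewise combines Theorem \ref{main theorem} with the FLC behavior of deep Frobenius powers and with Frobenius closure of partial parameter ideals (your first step is exactly the paper's Lemma \ref{lemma on ideals generated by part of a system of parameters}); the only difference is that you verify the colon criterion $(x_1,\dots,x_{i-1}):x_i=(x_1,\dots,x_{i-1}):\m$ while the paper verifies the equivalent $d$-sequence criterion of Definition \ref{definition of Buchsbaum ring}. One cosmetic imprecision: for a s.o.p.\ contained in $\m^N$ the FLC hypothesis a priori gives the colon with $\m^N$ (Theorem \ref{FLCandd-sequence}(2)), not with $\m$, but since you multiply by $r^{p^e}\in\m^{p^e}\subseteq\m^N$ once $p^e\ge N$, your argument goes through unchanged.
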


It is known that $F$-injective singularities in characteristic $p>0$ have close connections with Du Bois singularities in characteristic $0$. This connection was studied intensively by Schwede in \cite{SchwedeFInjectiveAreDuBois}, where it was proved that in characteristic $0$, singularities of dense $F$-injective type are Du Bois and it was conjectured that the converse is also true. Based on this connection, it is quite natural to consider the characteristic $0$ analogue of Question \ref{Takagi's question}:
\begin{question}[{\it cf.} Open Problem A.4 in \cite{KovacsandSchwedesurveyonlogcanonicalandDuBoissingulaities}]
\label{Takagi's question in char 0}
Suppose $(R,\m)$ is Du Bois and $H_\m^i(R)$ has finite length for each $i<\dim R$. Then is $R$ a Buchsbaum ring?
\end{question}

Note that by results of Ishida in \cite{IshidaIsolatedDuBoissingularities}, Question \ref{Takagi's question in char 0} has a positive answer if $(R,\m)$ is a normal isolated singularity. Using Schenzel's criterion for Bushsbaum singularities in \cite{SchenzelApplicationsOfDualizingComplexes} and Schwede's simple characterization of Du Bois singularities in \cite{SchwedeEasyCharacterization}, we provide a positive answer to Question \ref{Takagi's question in char 0} in the normal standard graded case.
\begin{theorem}
\label{main theorem in characteristic 0}
Let $(R,\m)$ be a normal standard graded $K$-algebra (i.e., generated over $K$ by 1-forms). If $R$ is Du Bois and $H_\m^i(R)$ has finite length for each $i<\dim R$, then $R$ is a Buchsbaum ring.
\end{theorem}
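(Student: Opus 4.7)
The plan is to combine Schenzel's cohomological criterion for Buchsbaum rings with Schwede's simple characterization of Du Bois singularities, exploiting the $\bG_m$-action on $\Spec R$ that is intrinsic to the standard graded hypothesis.

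First, by Schenzel's criterion in \cite{SchenzelApplicationsOfDualizingComplexes}, the finite-length hypothesis on $H_\m^i(R)$ for $i<d$ reduces the Buchsbaum conclusion to proving that $\m \cdot H_\m^i(R) = 0$ for all $i<d$. Since $R$ is standard graded with $\m = (R_1)$, it suffices to show that every $s \in R_1$ annihilates $H_\m^i(R)$ for each $i<d$; the cases $d \leq 2$ are immediate from normality, so I may assume $d \geq 3$.

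Next, I would present $R = K[x_1,\ldots,x_N]/I$ with the $x_j$ in $R_1$, yielding a $\bG_m$-equivariant closed embedding $X := \Spec R \hookrightarrow \bA^N_K =: Y$ (with $\bG_m$ scaling the coordinates). By an equivariant version of Hironaka, choose a $\bG_m$-equivariant log resolution $\pi\colon \tilde Y \to Y$ of the pair $(Y, X)$ so that $F := \pi^{-1}(X)_{\mathrm{red}}$ is a simple normal crossings divisor in $\tilde Y$. Schwede's simple characterization then converts the Du Bois hypothesis into the quasi-isomorphism $\O_X \simeq R\pi_* \O_F$. Applying $R\Gamma_\m$ and using $R\Gamma_\m \circ R\pi_* \simeq R\pi_* \circ R\Gamma_{\pi^{-1}(x)}$, where $x$ is the apex, this yields a graded identification
\[
H_\m^i(R) \;\cong\; H^i_{F_x}(F, \O_F), \qquad F_x := \pi^{-1}(x),
\]
under which multiplication by $s \in R_1$ on the left corresponds to multiplication by the weight-one section $s \in H^0(F, \O_F)_1$ on the right.

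The final step relies on the key observation that $F_x$ lies in the $\bG_m$-fixed locus of $F$ (since the action contracts $X$ to $x$), so every $s \in R_1$ vanishes along $F_x$, i.e., $s \in \Gamma(F, I_{F_x})$. The endomorphism $\cdot s\colon \O_F \to \O_F$ therefore factors through the ideal sheaf $I_{F_x}$, and using the short exact sequence $0 \to I_{F_x} \to \O_F \to \O_{F_x} \to 0$, the induced map on $H^i_{F_x}(F, \O_F)$ vanishes provided the natural restriction
\[
H^{i-1}_{F_x}(F, \O_F) \;\longrightarrow\; H^{i-1}(F_x, \O_{F_x})
\]
is surjective for $1 \leq i-1 \leq d-2$. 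Establishing this surjectivity is the main obstacle: the Du Bois quasi-isomorphism already forces $H^j(F, \O_F) = H^j(X, \O_X) = 0$ for $j \geq 1$ (using affineness of $X$), and combined with the SNC decomposition of $F$ and the $\bG_m$-equivariant decomposition of $F$ into attracting cells over the irreducible components of $F_x$, this should yield the required surjectivity. Schenzel's dualizing-complex framework supplies the cohomological bridge converting this Du Bois geometric input into the algebraic conclusion $\m H_\m^i(R) = 0$ for $i<d$, completing the proof.
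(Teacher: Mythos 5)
Your very first reduction is wrong, and it is a gap the paper itself explicitly warns about. You claim that Schenzel's criterion reduces the Buchsbaum conclusion to proving $\m \cdot H_\m^i(R) = 0$ for $i < d$. This is the \emph{quasi-Buchsbaum} condition, which is strictly weaker than Buchsbaum: as noted in Section 2 (Remark, item (3), citing \cite{GotonoteonquasiBuchsbaumrings}), there exist local rings whose $H_\m^i(R)$ are $K$-vector spaces for all $i < \dim R$ that are nonetheless \emph{not} Buchsbaum. The graded criterion the paper actually invokes (Theorem \ref{sufficient condision for Buchsbaum ring in the graded case}, Schenzel's Theorem 3.1) asks for something genuinely stronger: the existence of a single integer $t$ with $[H_\m^i(R)]_s = 0$ for every $s \neq t$ and every $i < \dim R$, i.e.\ \emph{concentration in a single degree}, simultaneously across all cohomological indices. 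Annihilation by $\m$ does not imply this; it merely kills the grading shifts, not the graded pieces themselves.

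Consequently, even if all of your geometric machinery — the $\bG_m$-equivariant log resolution, the identification $H_\m^i(R) \cong H^i_{F_x}(F,\O_F)$, and the surjectivity of the restriction $H^{i-1}_{F_x}(F,\O_F) \to H^{i-1}(F_x,\O_{F_x})$ — could be carried out rigorously (and the surjectivity step is itself left as the "main obstacle"), the conclusion you would reach is quasi-Buchsbaumness, not Buchsbaumness. The paper instead establishes concentration in degree $0$ in two separate halves: for $s > 0$ it uses a characterization of Du Bois section rings (Theorem \ref{characterization of Du Bois in graded case}, proved via the Rees algebra / blow-up of the vertex construction), giving $[H_\m^i(R)]_{>0} = 0$; and for $s < 0$ it identifies $[H_\m^{i+1}(R)]_s$ with $H^i(Z,\mathscr{L}^s)$ for $Z = \Proj R$ and applies Serre duality together with Kodaira vanishing for normal Cohen-Macaulay Du Bois projective varieties (Theorem \ref{Kodaira vanishing for CM Du Bois}), using the finite local cohomology hypothesis to ensure $Z$ is Cohen-Macaulay. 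The negative-degree vanishing requires a genuine vanishing theorem on $Z$; it is not a consequence of $\m$-annihilation, and your argument has no mechanism to produce it. You would need to rebuild your approach around concentration in a single degree, essentially handling positive and negative degrees by separate arguments, before the surjectivity-criterion machinery could close the proof.
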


Throughout this paper we will use $(R,\m)$ to denote either a Noetherian local ring with unique maximal ideal $\m$ or an $\mathbb{N}$-graded ring finitely generated over $K$ with unique homogeneous maximal ideal $\m$. Rings are always assumed to be equal characteristic (i.e., contain a field). In Section 2 we recall and review some basic definitions and properties about $F$-pure, $F$-injective and Buchsbaum rings. In Section 3 we work in equal characteristic $p>0$. We prove Theorem \ref{main theorem} and Corollary \ref{main corollary}. In Section 4 we work in equal characteristic $0$. We provide a criterion for Du Bois singularities for section rings of normal projective varieties and we prove Theorem \ref{main theorem in characteristic 0}, and we also remark that the conjecture that Du Bois singularities have dense $F$-injective type implies a positive answer to Question \ref{Takagi's question in char 0}.

\section{Preliminaries}

Let $(R,\m)$ be a local ring that contains a field. If $R$ has equal characteristic $p>0$, then there is a natural action of the Frobenius endomorphism of $R$ on each of its local cohomology modules $H_{\m}^i(R)$. Recall that a map of $R$-modules $N\rightarrow N'$ is {\it pure} if
for every $R$-module $M$ the map $N\otimes_RM\rightarrow N'\otimes_RM$ is injective. $R$ is called {\it $F$-pure} if the Frobenius endomorphism $R\xrightarrow{F} R$ is pure. $R$ is called {\it $F$-injective} if the Frobenius acts injectively on $H_\m^i(R)$ for every $i$. We point out that $F$-pure always implies $F$-injective (see \cite{HochsterRobertsFrobeniusLocalCohomology}).

When $R$ has equal characteristic $p>0$, for every ideal $I\subseteq R$, we define \[I^F=\{x\in R|\exists e, x^{p^e}\in I^{[p^e]} \}\] to be the {\it Frobenius closure} of $I$. $I$ is called {\it Frobenius closed} if $I^F=I$. It is well known that under mild conditions on $R$, $R$ is $F$-pure if and only if every ideal is Frobenius closed. Moreover, if $R$ is Cohen-Macaulay, then $R$ is $F$-injective if and only if every ideal generated by a system of parameters is Frobenius closed, and also if and only if one single ideal generated by a system of parameters is Frobenius closed.

We say that a local ring $(R,\m)$ has {\it finite local cohomology} if $H_\m^i(R)$ has finite length for every $i<\dim R$. It is well known that, under mild conditions on $R$, $R$ has finite local cohomology if and only if $R$ is equidimensional and Cohen-Macaulay on the punctured spectrum (see \cite{SchenzelTrungVerallgemeinerteCohenMacaulayModuln}). We will need the following important result characterizing rings with finite local cohomology. This result and its equivalent form appeared in \cite{SchenzelTrungVerallgemeinerteCohenMacaulayModuln}, \cite{Schenzelstandardsystemsofparemeters} and \cite{GotoOgawaAnoteonringswithFLC}. We recall that a sequence of elements $x_1,\dots,x_r$ in a local ring $R$ is called a {\it $d$-sequence} if $(x_1,\dots,x_{i-1}):x_ix_j=(x_1,\dots,x_{i-1}):x_j$ for every $1\leq i\leq j\leq r$.

\begin{theorem}[{\it cf.} Proposition 2.1 and 2.2 in \cite{Schenzelstandardsystemsofparemeters} and the main Theorem in \cite{GotoOgawaAnoteonringswithFLC}]
\label{FLCandd-sequence}
Let $(R,\m)$ be a local ring of dimension $n$, then the following are equivalent:
\begin{enumerate}
\item $H_{\m}^i(R)$ has finite length for all $i\neq n$.
\item There exists an integer $N$ such that for every system of parameters $x_1,\dots, x_n$ contained in $\m^N$, we have \[(x_1,\dots,x_{i-1}):x_i=(x_1,\dots,x_{i-1}):\m^N.\]
\item There exists an integer $N$ such that every system of parameters contained in $\m^N$ is a $d$-sequence.
\item There exists an integer $N$ and a constant $C$ such that for every system of parameters $x_1,\dots,x_n$ of $R$, we have \[l(R/(\underline{x}))-e(\underline{x}, R)\leq C\] with equality when $(x_1,\dots,x_n)\subseteq \m^N$.
\end{enumerate}
Moreover, when the equivalent conditions hold, we can let \[C=\sum_{i=1}^{n-1}\binom{n-1}{i}l(H_\m^i(R)).\]
\end{theorem}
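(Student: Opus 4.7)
The plan is to establish the cycle $(1)\Rightarrow(2)\Rightarrow(3)\Rightarrow(4)\Rightarrow(1)$ and extract the constant $C$ along the way. For $(1)\Rightarrow(2)$: since each $H_\m^i(R)$ with $i<n$ is Artinian of finite length, there exists $N$ with $\m^N H_\m^i(R)=0$ for every $i<n$. I would prove by induction on $i$ that for any system of parameters $x_1,\dots,x_n\in\m^N$ one has $(x_1,\dots,x_{i-1}):x_i=(x_1,\dots,x_{i-1}):\m^N$. Setting $R_j:=R/(x_1,\dots,x_j)$, the inductive step uses the long exact sequence in local cohomology attached to the short exact sequence $0\to R_{i-1}/(0:_{R_{i-1}}x_i)\xrightarrow{x_i}R_{i-1}\to R_i\to 0$; the finite-length hypothesis propagates under this induction so that $\m^N$ annihilates $H_\m^0(R_{i-1})$, and the colon $((x_1,\dots,x_{i-1}):x_i)/(x_1,\dots,x_{i-1})$ is exactly the $x_i$-torsion of $R_{i-1}$, which then coincides with $H_\m^0(R_{i-1})$.

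For $(2)\Rightarrow(3)$: the right-hand side of (2) does not involve $x_i$, so for any $x_j\in\m^N$ we have $(x_1,\dots,x_{i-1}):x_j=(x_1,\dots,x_{i-1}):\m^N=(x_1,\dots,x_{i-1}):x_i$, from which the $d$-sequence identity $(x_1,\dots,x_{i-1}):x_ix_j=(x_1,\dots,x_{i-1}):x_j$ follows formally. For $(3)\Rightarrow(4)$: for a $d$-sequence system of parameters the Koszul homology $H_\bullet(\underline{x};R)$ has finite length and admits an explicit decomposition into copies of the modules $H_\m^j(R)$ (via the structural work of Goto--Shimoda and Huneke on $d$-sequences). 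Combining Serre's formula $e(\underline{x},R)=\sum_i(-1)^il(H_i(\underline{x};R))$ with $l(R/(\underline{x}))=l(H_0(\underline{x};R))$ rewrites $l(R/(\underline{x}))-e(\underline{x},R)$ as an alternating sum of Koszul homology lengths, which after combinatorial bookkeeping evaluates to the binomial expression $\sum_{i=1}^{n-1}\binom{n-1}{i}l(H_\m^i(R))$. For sops not necessarily in $\m^N$, Serre's inequality $l(R/(\underline{x}))\ge e(\underline{x},R)$, together with the observation that the difference is monotone under replacing $\underline{x}$ by deeper powers, shows $C$ is the global maximum.

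For $(4)\Rightarrow(1)$ I would argue by contrapositive: if some $H_\m^i(R)$ with $i<n$ has infinite length, then via the presentation $H_\m^i(R)=\varinjlim_t H^i(x_1^t,\dots,x_n^t;R)$ as a direct limit of Koszul cohomology, sufficiently high powers of a fixed system of parameters produce Koszul cohomology, hence also $l(R/(\underline{x}^t))-e(\underline{x}^t,R)$, of arbitrarily large length, contradicting the uniform bound in (4). The main obstacle is the moreover formula inside $(3)\Rightarrow(4)$: converting the alternating Euler characteristic into the precise binomial sum $\sum_{i=1}^{n-1}\binom{n-1}{i}l(H_\m^i(R))$ requires either an inductive peeling argument (one parameter at a time, tracking how $H_\m^*(R/x_1R)$ relates to $H_\m^*(R)$ when $x_1$ is a $d$-sequence element) or a direct application of the structural description of the Koszul homology of $d$-sequences, and it is exactly here that the $d$-sequence uniformity must be fully exploited.
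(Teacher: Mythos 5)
The paper does not actually prove Theorem~\ref{FLCandd-sequence}; it cites the result from Schenzel and Goto--Ogawa, so there is no in-paper argument to compare against. Evaluating your outline on its own terms, the implication $(4)\Rightarrow(1)$ contains a genuine logical gap. You write that if some $H_\m^i(R)$ with $i<n$ has infinite length then, via $H_\m^i(R)=\varinjlim_t H^i(\underline{x}^t;R)$, high powers of a sop produce Koszul cohomology ``hence also $l(R/(\underline{x}^t))-e(\underline{x}^t,R)$ of arbitrarily large length.'' But $l(R/(\underline{x}^t))-e(\underline{x}^t,R)=\sum_{j\geq 1}(-1)^{j+1}\,l\bigl(H_j(\underline{x}^t;R)\bigr)$ is an alternating sum, and large individual Koszul homology lengths need not force the alternating sum to be large, precisely because of cancellation between consecutive terms. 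The known positivity of the partial Euler characteristics $\chi_j=\sum_{i\geq j}(-1)^{i-j}l(H_i)$ gives $l(H_j)\geq\chi_j$, which is the wrong direction for your argument. Establishing that a uniform bound on $\chi_1(\underline{x};R)$ over all parameter systems actually forces finite local cohomology is the deepest part of the theorem and needs a real idea (in the literature it comes from the theory of standard/filter-regular systems of parameters, not from a limit-of-Koszul-cohomology count).

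The implication $(1)\Rightarrow(2)$ is also under-justified as stated. You assert that $(x_1,\dots,x_{i-1}):x_i/(x_1,\dots,x_{i-1})$, i.e.\ the $x_i$-torsion of $R_{i-1}=R/(x_1,\dots,x_{i-1})$, ``coincides with $H_\m^0(R_{i-1})$.'' This identification requires $x_i$ to avoid every associated prime of $R_{i-1}$ other than $\m$ (a filter-regularity condition); membership in $\m^N$ alone does not give this. It does follow if $R_{i-1}$ itself has finite local cohomology (so its only non-maximal associated primes are minimal of top dimension), but establishing that $R/(x_1,\dots,x_{i-1})$ has finite local cohomology, with an annihilator exponent for its local cohomology that one can control uniformly in $N$, is exactly the inductive content one must supply --- and it presupposes that $x_1,\dots,x_{i-1}$ are already filter-regular, which is essentially what you are trying to prove. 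Breaking this circularity (Schenzel does it via filter-regular sequences and a careful tracking of how the annihilator exponent grows under passage from $R_{i-1}$ to $R_i$) is the missing step. Your sketches of $(2)\Rightarrow(3)$ and the reduction of $(3)\Rightarrow(4)$ to the structure of Koszul homology for $d$-sequences are on the right track, though the combinatorics producing the binomial coefficients likewise needs to be carried out.
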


Now we give the definition of Buchsbaum rings. It turns out that there are many different ways to define them. We also note that the definition of Buchsbaum rings is characteristic free (in fact it makes sense in mixed characteristic also, but we will not use this).
\begin{definition}\label{definition of Buchsbaum ring}
The following conditions on a local ring $(R,\m, K)$ are equivalent:
\begin{enumerate}
\item For every system of parameters $x_1,\dots,x_n$, we have \[(x_1,\dots,x_{i-1}):x_i=(x_1,\dots,x_{i-1}):\m\]for every $i$.
\item Every system of parameters is a $d$-sequence.
\item The difference $l_R(R/J)-e(J,R)$, where $J$ is an ideal generated by a system of parameters, is an invariant of $R$ (i.e., it is independent of $J$).
\item There is a system of parameters $\underline{x}=x_1,\dots,x_n$ such that $\tau^nC^{\bullet}(\underline{x}, R)$ is quasi-isomorphic to a complex of $K$-vector spaces, where $\tau^nC^{\bullet}(\underline{x}, R)$ is the \v{C}ech complex truncated from above at the $n$-th place.
\end{enumerate}
When $R$ satisfies one of these equivalent conditions, it is called a {\it Buchsbaum} ring.
\end{definition}

We refer to \cite{HunekeTheoryofdSequence} for $(1)\Leftrightarrow(2)$, \cite{StuckradandVogelEineVerallgemeinerungderCohenMacaulayRinge} for $(1)\Leftrightarrow(3)$ and \cite{SchenzelApplicationsOfDualizingComplexes} for $(1)\Leftrightarrow(4)$. Next we summarize some basic facts about Buchsbaum rings.
\begin{remark}
\begin{enumerate}
\item Cohen-Macaulay rings are obviously Buchsbaum. Moreover, by Definition \ref{definition of Buchsbaum ring}, it is clear that $R$ is Buchsbaum if and only if we can take $N=1$ in (2)-(4) in Theorem \ref{FLCandd-sequence}. So among rings with finite local cohomology, Buchsbaum rings are the closest to Cohen-Macaulay rings.
\item By $(4)$ in Definition \ref{definition of Buchsbaum ring}, $R$ is Buchsbaum implies $H_\m^i(R)$ are $K$-vector spaces for all $i<n=\dim R$. However, there exist local rings such that $H_\m^i(R)$ are $K$-vector spaces for all $i<n=\dim R$ but $R$ is {\it not} Buchsbaum (see \cite{GotonoteonquasiBuchsbaumrings}).
\end{enumerate}
\end{remark}

We also mention the notion of Buchsbaum modules introduced in \cite{StuckradandVogelTowardatheoryofBuchsbaumsingularities}. A finitely generated module $M$ over a local ring $(R,\m)$ is called a {\it Buchsbaum module} of dimension $d$ if \[(x_1,\dots,x_{i-1})M:_Mx_i=(x_1,\dots,x_{i-1})M:_M\m\] for every system of parameters $(x_1,\dots,x_d)$ of $M$ and every $1\leq i\leq d$. So $R$ is Buchsbaum if and only if $R$ is a Buchsbaum module over $R$ of maximal dimension $=\dim R$. We will need the following powerful criterion, the so-called surjectivity criterion, of Buchsbaum modules:
\begin{theorem}[{\it cf.} Theorem 1 in \cite{StuckradandVogelTowardatheoryofBuchsbaumsingularities} and Satz 2 in \cite{StuckradsurjectivitycriterionofBuchsbaum}]
\label{surjectivity criterion for Buchsbaum modules}
Let $M$ be a finitely generated module over a local ring $(R,\m, K)$. If the canonical maps $\Ext_R^i(K, M)\to H_\m^i(M)$ are surjective for all $i\neq d=\dim M$ then $M$ is a Buchsbaum module. Moreover, if $R$ is regular, then the converse also holds.
\end{theorem}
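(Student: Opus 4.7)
The plan is to prove the forward direction by induction on $d = \dim M$, and to obtain the converse over regular $R$ via local duality. The base case $d=0$ is immediate, since $M$ has finite length and every system of parameters is empty, so the Buchsbaum colon identities hold vacuously.

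For the inductive step, the first observation is that the surjectivity of $\Ext_R^i(K, M) \twoheadrightarrow H_\m^i(M)$ forces every element of $H_\m^i(M)$ to be in the image of a module killed by $\m$, hence $\m \cdot H_\m^i(M) = 0$ for all $i < d$. This is the quasi-Buchsbaum property, which is one half of the Buchsbaum condition but, as the remark after Definition~\ref{definition of Buchsbaum ring} notes, not by itself sufficient. To upgrade it, I fix a system of parameters $x_1, \ldots, x_d$ of $M$ and first observe that $x_1$ is automatically filter-regular (since $\m H_\m^i(M) = 0$ for $i<d$). Then the short exact sequence
$$0 \to M/(0 :_M x_1) \xrightarrow{x_1} M \to M/x_1 M \to 0$$
gives compatible long exact sequences of $\Ext_R^{\bullet}(K, -)$ and $H_\m^{\bullet}(-)$. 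A diagram chase, together with quasi-Buchsbaumness (which kills the connecting maps involving $H_\m^i(M)$), lets me transfer the Ext-to-local-cohomology surjectivity from $M$ to $M/x_1 M$ in the relevant range. By induction $M/x_1 M$ is a Buchsbaum module of dimension $d-1$; its colon identities, combined with $\m H_\m^i(M) = 0$, lift to the colon identities $(x_1, \ldots, x_{i-1})M :_M x_i = (x_1, \ldots, x_{i-1})M :_M \m$ for $M$, giving that $M$ is Buchsbaum.

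For the converse when $R$ is regular of dimension $n$, I would apply local duality: with $E = E_R(K)$, Matlis dualizing gives $H_\m^i(M)^{\vee} \cong \Ext_R^{n-i}(M, R)$, while the Koszul complex on a regular system of parameters of $R$ is a minimal free resolution of $K$ and so computes $\Ext_R^i(K, M)$ as a subquotient of $M$. Under these identifications, the natural map $\Ext_R^i(K, M) \to H_\m^i(M)$ dualizes to a concrete Koszul-theoretic map whose surjectivity becomes a colon condition on $M$ with respect to any regular system of parameters of $R$, and this is precisely what the Buchsbaum colon identities for $M$ provide.

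The main obstacle is the descent step in the induction: one must control $0 :_M x_1$, which need not vanish when $M$ is not Cohen--Macaulay, and track the interaction of the Ext and $H_\m$ long exact sequences under the division by $x_1$; quasi-Buchsbaumness is what makes the error terms vanish and lets the surjectivity hypothesis pass to $M/x_1 M$. The converse direction is essentially a duality computation, but matching the Buchsbaum colon identities with the Koszul-theoretic data required by local duality over a regular $R$ is the technically delicate point.
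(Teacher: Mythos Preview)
The paper does not prove this theorem. It is stated in the preliminaries (Section~2) with citations to St\"uckrad--Vogel and St\"uckrad, and is then used as a black box in the proof of Theorem~\ref{main theorem 2}. There is therefore no in-paper argument to compare your proposal against.

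For what it is worth, your sketch is broadly in line with how the result is proved in the cited sources: the observation that surjectivity of $\Ext_R^i(K,M)\to H_\m^i(M)$ forces $\m\cdot H_\m^i(M)=0$ is the correct first step, and an inductive reduction modulo a parameter element is the standard engine. The point you flag as the main obstacle really is the crux: one must show that the surjectivity hypothesis descends to $M/x_1M$, and this requires more than just noting that $x_1$ is filter-regular --- the two long exact sequences (in $\Ext$ and in $H_\m$) must be compared carefully, and the module $0:_M x_1$ has to be handled (it sits inside $H_\m^0(M)$, which is a $K$-vector space by hypothesis, so this is manageable). Your account of the converse over regular $R$ is vaguer than the forward direction; in the literature this is often done not by Matlis-dualizing the map directly but by using the equivalent characterization of Buchsbaumness via the truncated dualizing or \v{C}ech complex (condition~(4) in Definition~\ref{definition of Buchsbaum ring}), from which the $\Ext$-surjectivity can be read off when $R$ is regular.
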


In \cite{SchenzelApplicationsOfDualizingComplexes}, Schenzel observed the following criterion for rings to be Buchsbaum in the graded case which turns out to be very useful. In fact, this follows easily from (4) in Definition \ref{definition of Buchsbaum ring}.
\begin{theorem}[{\it cf.} Theorem 3.1 in \cite{SchenzelApplicationsOfDualizingComplexes}]
\label{sufficient condision for Buchsbaum ring in the graded case}
Let $(R,\m)$ be a special graded $K$ algebra (meaning that $R$ is non-negatively graded of finite type over $K$). If there exists an integer $t$ such that $[H_\m^i(R)]_s=0$ for all $s\neq t$ and for every $i<\dim R$, then $R$ is a Buchsbaum ring.
\end{theorem}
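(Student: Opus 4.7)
The plan is to verify criterion~(4) of Definition~\ref{definition of Buchsbaum ring}. Since $R$ is non-negatively graded and finitely generated over $K$, we pick a homogeneous system of parameters $\underline{x} = x_1,\dots,x_n$ and form the truncated \v{C}ech complex $\tau^n C^\bullet(\underline{x}, R)$, a bounded complex of graded $R$-modules whose cohomology in homological degrees $i<n$ equals $H_\m^i(R)$; by hypothesis, this cohomology is concentrated in the single graded degree $t$.

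The key observation is that because $R$ is non-negatively graded, the decreasing filtration
\[ F^a := \bigoplus_{s \geq a}\bigl[\tau^n C^\bullet(\underline{x},R)\bigr]_s \]
is a filtration of $\tau^n C^\bullet(\underline{x},R)$ by graded $R$-subcomplexes: multiplication by $R_k$ sends an element of graded degree $s$ into graded degree $s+k \geq s$, so each $F^a$ is stable under the $R$-action, and the \v{C}ech differentials are $R$-linear and degree-preserving, so each $F^a$ is closed under $d$. The associated graded pieces are $F^a/F^{a+1} \cong [\tau^n C^\bullet(\underline{x},R)]_a$, and on each such quotient $\m = R_+$ acts as zero for the same degree-shift reason.

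By hypothesis, whenever $a \neq t$ we have $H^i\bigl([\tau^n C^\bullet(\underline{x},R)]_a\bigr) = [H_\m^i(R)]_a = 0$ for all $i<n$. Consequently $F^{t+1}$ and the quotient $\tau^n C^\bullet(\underline{x},R)/F^t$ are both acyclic (they are obtained by assembling only off-$t$ graded strips). This yields a chain of quasi-isomorphisms of complexes of graded $R$-modules
\[ \tau^n C^\bullet(\underline{x},R) \xleftarrow{\sim} F^t \xrightarrow{\sim} F^t/F^{t+1} = [\tau^n C^\bullet(\underline{x},R)]_t, \]
and the rightmost complex has $\m$ acting as zero on every term, hence is a complex of $K$-vector spaces. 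Criterion~(4) of Definition~\ref{definition of Buchsbaum ring} then applies and $R$ is Buchsbaum.

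The step to get right is the interplay between the graded and homological structure: the non-negativity of the grading is what makes $F^\bullet$ a filtration by $R$-subcomplexes, and the concentration of cohomology in the single graded degree $t$ is what makes $F^{t+1}$ and $\tau^n C^\bullet(\underline{x},R)/F^t$ acyclic. Together these two ingredients force the truncated \v{C}ech complex to be formal in the derived category of graded $R$-modules, which is strictly stronger than the pointwise statement that each $H_\m^i(R)$ is a $K$-vector space — a distinction that matters since quasi-Buchsbaum rings need not be Buchsbaum, as noted in the remark following Definition~\ref{definition of Buchsbaum ring}.
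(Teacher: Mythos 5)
Your proof is correct and follows the same route the paper indicates, namely verifying criterion~(4) of Definition~\ref{definition of Buchsbaum ring}; the paper itself only remarks that the theorem ``follows easily from (4)'' and cites Schenzel, so your argument is a careful fleshing-out of precisely that approach. In particular your use of the degree filtration $F^\bullet$ (rather than the naive degree-by-degree direct-sum decomposition, which is \emph{not} a decomposition into $R$-subcomplexes) is exactly the step needed to produce the quasi-isomorphisms as maps of complexes of $R$-modules, and your closing remark correctly identifies why this gives strictly more than quasi-Buchsbaumness.
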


When $(R,\m)$ is special graded, $F$-injective and has finite local cohomology, it is easy to see that for every $i<\dim R$, $H_\m^i(R)=[H_\m^i(R)]_0$. Hence in this case, we can apply Theorem \ref{sufficient condision for Buchsbaum ring in the graded case} with $t=0$. Therefore if $(R,\m)$ is special graded, $F$-injective and $R$ has finite local cohomology, then $R$ is Buchsbaum. This fact is known to experts. In fact, this is exactly Proposition 4.1 in \cite{SchenzelApplicationsOfDualizingComplexes}. Although Schenzel requires that $R$ be $F$-pure, exactly the same argument works when $R$ is $F$-injective.

\section{Characteristic $p>0$ results}
In this section we prove our main results in characteristic $p>0$, Theorem \ref{main theorem} and Corollary \ref{main corollary}. Throughout this section all rings are of equal characteristic $p>0$ (although we will repeat this sometimes). We start by proving two simple lemmas that we will use.
\begin{lemma}
\label{lemma on ideals generated by part of a system of parameters}
If every ideal generated by a full system of parameters is Frobenius closed, then so is every ideal generated by part of a system of parameters.
\end{lemma}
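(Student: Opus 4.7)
The plan is to take $u \in (x_1, \ldots, x_k)^F$, where $x_1, \ldots, x_k$ is part of a system of parameters, and deduce $u \in (x_1, \ldots, x_k)$. First I would complete $x_1, \ldots, x_k$ to a full system of parameters $x_1, \ldots, x_n$. By definition of Frobenius closure, there exists $e \geq 0$ with $u^{p^e} \in (x_1^{p^e}, \ldots, x_k^{p^e})$.

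The key device is to exploit the hypothesis, which only applies to \emph{full} systems of parameters, by padding out the missing parameters with large powers. For each integer $N \geq 1$, the tuple $x_1, \ldots, x_k, x_{k+1}^N, \ldots, x_n^N$ is still a full system of parameters of $R$. The containment $u^{p^e} \in (x_1^{p^e}, \ldots, x_k^{p^e})$ immediately gives
\[
u^{p^e} \in \bigl(x_1, \ldots, x_k, x_{k+1}^N, \ldots, x_n^N\bigr)^{[p^e]},
\]
so $u$ lies in the Frobenius closure of this full s.o.p.\ ideal; by hypothesis this closure equals the ideal itself, hence
\[
u \in \bigl(x_1, \ldots, x_k, x_{k+1}^N, \ldots, x_n^N\bigr) \quad \text{for every } N \geq 1.
\]

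Next I would pass to the quotient $\overline{R} := R/(x_1, \ldots, x_k)$. The previous step gives $\overline{u} \in (\overline{x}_{k+1}^N, \ldots, \overline{x}_n^N) \subseteq (\overline{x}_{k+1}, \ldots, \overline{x}_n)^N$ for every $N$, so $\overline{u}$ lies in $\bigcap_{N \geq 1} (\overline{x}_{k+1}, \ldots, \overline{x}_n)^N$. Since $\overline{R}$ is a Noetherian (graded) local ring and the ideal $(\overline{x}_{k+1}, \ldots, \overline{x}_n)$ is contained in the (homogeneous) maximal ideal of $\overline{R}$, Krull's intersection theorem forces this intersection to vanish. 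Therefore $\overline{u} = 0$, i.e.\ $u \in (x_1, \ldots, x_k)$, which is the desired conclusion.

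I do not anticipate a real obstacle in this argument. The only mildly subtle point is that no \emph{single} extension to a full system of parameters suffices to pin $u$ down inside $(x_1, \ldots, x_k)$: each extension merely places $u$ in a larger ideal. The trick is to vary the exponent $N$ simultaneously over all positive integers and then invoke Krull's intersection theorem in the quotient to squeeze $u$ back into $(x_1, \ldots, x_k)$.
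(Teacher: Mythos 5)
Your proof is correct and is essentially the paper's own argument: pad the partial system of parameters with arbitrarily high powers of the remaining parameters, apply the hypothesis to the resulting full systems, and squeeze $u$ into $(x_1,\dots,x_k)$ by intersecting over all $N$. You merely make explicit (via Krull's intersection theorem in $R/(x_1,\dots,x_k)$) the intersection identity $\bigcap_N(x_1,\dots,x_k,x_{k+1}^N,\dots,x_n^N)=(x_1,\dots,x_k)$ that the paper asserts without comment.
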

\begin{proof}
Suppose $(x_1,\dots,x_t)$ is part of a system of parameters, contained in $(x_1,\dots,x_t,x_{t+1},\dots,x_n)$. If $y\in (x_1,\dots,x_t)^F$, then $y\in (x_1,\dots,x_t,x_{t+1}^s,\dots,x_n^s)^F=(x_1,\dots,x_t,x_{t+1}^s,\dots,x_n^s)$ for every $s>0$. So \[y\in \bigcap_s(x_1,\dots,x_t,x_{t+1}^s,\dots,x_n^s)=(x_1,\dots,x_t).\]
\end{proof}

\begin{lemma}
\label{lemma on Frobenius action on top local cohomology}
Let $(R,\m)$ be a local ring such that every ideal generated by a system of parameters is Frobenius closed. Then the Frobenius acts injectively on $H_\m^j(R)$ for $j=\dim R$ and $j=\depth R$.
\end{lemma}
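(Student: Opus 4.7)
\medskip
\noindent\textbf{Proof proposal.} The plan is to handle the two cases separately, in each reducing the injectivity of Frobenius on local cohomology to the Frobenius-closure hypothesis applied to an explicit ideal generated by (part of) a system of parameters. In both cases the computation happens on the \v{C}ech-style presentation $\varinjlim_t R/(x_1^t,\dots,x_r^t)$.

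For $j=\dim R=:n$, I would fix a system of parameters $x_1,\dots,x_n$ and use $H_\m^n(R)=\varinjlim_t R/(x_1^t,\dots,x_n^t)$. A class $\eta\in H_\m^n(R)$ is represented by some $y\in R$ modulo $(x_1^t,\dots,x_n^t)$, and then $F\eta$ is represented by $y^p$ modulo $(x_1^{pt},\dots,x_n^{pt})$. The assumption $F\eta=0$ yields $s\geq 0$ with $(x_1\cdots x_n)^s y^p\in(x_1^{pt+s},\dots,x_n^{pt+s})$; after enlarging $s$ to be of the form $pm$ (one may always do so by multiplying through by a further power of $x_1\cdots x_n$), this becomes $\bigl(y(x_1\cdots x_n)^m\bigr)^p\in(x_1^{p(t+m)},\dots,x_n^{p(t+m)})$, so $y(x_1\cdots x_n)^m$ lies in the Frobenius closure of the parameter ideal $(x_1^{t+m},\dots,x_n^{t+m})$. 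By hypothesis this closure equals the ideal itself, and hence $\eta=0$ in $H_\m^n(R)$.

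For $j=\depth R=:d$, the core issue is that $H_\m^d(R)$ is not directly a top \v{C}ech cohomology module. To remedy this, pick a maximal regular sequence $x_1,\dots,x_d$, which is part of a system of parameters. Since $x_1,\dots,x_d$ is regular on $R$, one has $H^j_{(x_1,\dots,x_d)}(R)=0$ for $j\neq d$, and the Grothendieck composition-of-functors spectral sequence
\[E_2^{i,j}=H_\m^i\bigl(H^j_{(x_1,\dots,x_d)}(R)\bigr)\Longrightarrow H^{i+j}_\m(R)\]
degenerates, identifying $H_\m^d(R)$ with $H_\m^0\bigl(H^d_{(x_1,\dots,x_d)}(R)\bigr)$ and thus embedding it $F$-equivariantly into $H^d_{(x_1,\dots,x_d)}(R)=\varinjlim_t R/(x_1^t,\dots,x_d^t)$. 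After this reduction the calculation proceeds exactly as in the top case: the ideals that appear are $(x_1^{t+m},\dots,x_d^{t+m})$, generated by part of a system of parameters, and Lemma~\ref{lemma on ideals generated by part of a system of parameters} is invoked to conclude that they are Frobenius closed. This gives $\eta=0$.

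The main obstacle is precisely this last step: manufacturing an $F$-equivariant inclusion of $H_\m^d(R)$ into a module that admits an honest \v{C}ech presentation with respect to a partial system of parameters. The spectral sequence above is the cleanest route, but equivalently one could build the inclusion by induction on $d$ using the long exact sequences attached to $0\to R/(x_1,\dots,x_{i-1})\xrightarrow{x_i}R/(x_1,\dots,x_{i-1})\to R/(x_1,\dots,x_i)\to 0$. In either case, once the embedding is in hand, the preceding lemma on partial systems of parameters is exactly what is needed to transplant the top-case computation.
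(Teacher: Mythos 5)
Your proposal is correct and follows essentially the same route as the paper. Both parts match: for $j=\dim R$ you carry out the direct-limit \v{C}ech computation that the paper packages as an injectivity-of-Frobenius-on-$H_J^j(R)$ observation, and for $j=\depth R$ you invoke exactly the paper's composition-of-functors spectral sequence $H_\m^p(H_J^q(R))\Rightarrow H_\m^{p+q}(R)$ with $J$ generated by a maximal regular sequence, degenerate it, and then apply the top-case computation to the partial system of parameters via Lemma~\ref{lemma on ideals generated by part of a system of parameters}.
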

\begin{proof}
First notice that if $J^{[q]}=(x_1^q,\dots,x_j^q)$ is Frobenius closed for every $q=p^e$, then Frobenius acts injectively on $H_J^j(R)$. This is because we have a direct limit system
\[  \xymatrix{
    R/J \ar[r]  \ar[d]^{F} &  R/J^{[p]}  \ar[r]  \ar[d]^{F}   &  R/J^{[p^2]} \ar[r]\ar[d]^{F} &  \cdots \\
    R/J^{[p]}\ar[r]  &  R/J^{[p^2]} \ar[r]   &  R/J^{[p^3]} \ar[r] & \cdots 
} \]
where the vertical maps are the Frobenius and the horizontal maps are multiplications by $(x_1\cdots x_j)^{p^e-p^{e-1}}$ at the corresponding spots. The direct limit of both lines are $H_J^j(R)$ and the vertical map is exactly the Frobenius action on $H_J^j(R)$. Since $J^{[q]}$ is Frobenius closed, we know that each vertical map is injective, hence so is the induced map on the direct limit. Therefore Frobenius acts injectively on $H_J^j(R)$. From this it follows immediately that if every ideal generated by a system of parameters is Frobenius closed, then Frobenius acts injectively on $H_\m^n(R)$ for $n=\dim R$.

Now we let $I=(x_1,\dots,x_n)$ be a system of parameters with $(x_1,\dots,x_r)$ a maximal regular sequence in $R$ (i.e., $r=\depth R$). By Lemma \ref{lemma on ideals generated by part of a system of parameters}, we know that $(x_1^q,\dots,x_r^q)$ is Frobenius closed for every $q=p^e$. We have the local cohomology spectral sequence: \[E_2^{p,q}=H_{I}^p(H_J^q(R))\Rightarrow H_{I+J}^{p+q}(R).\] We apply this spectral sequence to $I=\m$ and $J=(x_1,\dots,x_r)$. Since $(x_1,\dots,x_r)$ is a regular sequence, $H_J^q(R)$ vanishes unless $q=r$. So this spectral sequence degenerates. So we know that \[H_\m^0(H_J^r(R))\cong H_\m^r(R).\] But since $J^{[q]}=(x_1^q,\dots,x_r^q)$ is Frobenius closed for every $q=p^e$, the above argument shows that Frobenius acts injectively on $H_J^r(R)$, hence it also acts injectively on $H_\m^0(H_J^r(R))\cong H_\m^r(R)$.
\end{proof}

The next proposition is essentially taken from \cite{GotoOgawaAnoteonringswithFLC}, where the authors show that for rings of finite local cohomology, $F$-purity implies Buchsbaumness. But in fact, the argument in \cite{GotoOgawaAnoteonringswithFLC} only uses that every ideal generated by a system of parameters is Frobenius closed. We give a short proof of this proposition for completeness.
\begin{proposition}[{\it cf.} main Corollary in \cite{GotoOgawaAnoteonringswithFLC}]
\label{s.o.p Frobenius closed implies Buchsbaum}
Let $(R,\m)$ be a local ring of equal characteristic $p>0$. Suppose $R$ has finite local cohomology and every ideal generated by a system of parameters is Frobenius closed. Then $R$ is Buchsbaum.
\end{proposition}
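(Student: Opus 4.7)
The plan is to verify characterization (1) of Definition \ref{definition of Buchsbaum ring}, namely that $(x_1,\dots,x_{i-1}):x_i=(x_1,\dots,x_{i-1}):\m$ for every system of parameters $x_1,\dots,x_n$ and every $i$. The inclusion $\supseteq$ is trivial, so I focus on $\subseteq$. The bridging tool will be Theorem \ref{FLCandd-sequence}(2): since $R$ has finite local cohomology, there is an integer $N$ such that $(y_1,\dots,y_{j-1}):y_j=(y_1,\dots,y_{j-1}):\m^N$ holds whenever $y_1,\dots,y_n$ is a system of parameters contained in $\m^N$.

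First I fix a system of parameters $x_1,\dots,x_n$ and an index $i$. Since an arbitrary system of parameters need not sit in $\m^N$, I pass instead to Frobenius powers: for $e\gg 0$ the sequence $x_1^{p^e},\dots,x_n^{p^e}$ is a system of parameters contained in $\m^N$, so Theorem \ref{FLCandd-sequence}(2) applied to it yields
\[(x_1^{p^e},\dots,x_{i-1}^{p^e}):x_i^{p^e}=(x_1^{p^e},\dots,x_{i-1}^{p^e}):\m^N.\]
Now take $y\in(x_1,\dots,x_{i-1}):x_i$ and an arbitrary $x\in\m$; the goal is $yx\in(x_1,\dots,x_{i-1})$. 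By Lemma \ref{lemma on ideals generated by part of a system of parameters} the ideal $(x_1,\dots,x_{i-1})$ is Frobenius closed, so it suffices to show $(yx)^{p^e}\in(x_1,\dots,x_{i-1})^{[p^e]}$ for some $e$. Raising $yx_i\in(x_1,\dots,x_{i-1})$ to the $p^e$-th power gives $y^{p^e}x_i^{p^e}\in(x_1,\dots,x_{i-1})^{[p^e]}$, so $y^{p^e}$ lies in the colon ideal on the left-hand side of the displayed equation. Enlarging $e$ so that in addition $\m^{p^e}\subseteq\m^N$ (and hence $x^{p^e}\in\m^N$), the displayed equation produces $y^{p^e}x^{p^e}\in(x_1^{p^e},\dots,x_{i-1}^{p^e})=(x_1,\dots,x_{i-1})^{[p^e]}$, which is exactly what was needed.

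The step I expect to carry the essential weight is precisely this passage from the given system of parameters $\underline{x}$ to its Frobenius powers $\underline{x}^{p^e}$: Theorem \ref{FLCandd-sequence} only controls systems of parameters inside $\m^N$, while the Frobenius closure hypothesis is exactly what licenses pulling a conclusion about $\underline{x}^{p^e}$ back to one about $\underline{x}$. Beyond this Frobenius-power bookkeeping I do not anticipate any serious obstacle, since Lemma \ref{lemma on ideals generated by part of a system of parameters} and Theorem \ref{FLCandd-sequence} do most of the work.
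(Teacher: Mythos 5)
Your proof is correct and essentially the same as the paper's. The paper verifies characterization (2) of Definition \ref{definition of Buchsbaum ring} (every system of parameters is a $d$-sequence) via Theorem \ref{FLCandd-sequence}(3), while you verify characterization (1) (the $\m$-colon condition) via Theorem \ref{FLCandd-sequence}(2); but the core mechanism is identical in both: raise the colon relation to $p^e$-th powers so the Frobenius powers of the parameters land in $\m^N$, apply the finite-local-cohomology structure theorem there, and pull the conclusion back to the original parameters using Frobenius closure of $(x_1,\dots,x_{i-1})$ from Lemma \ref{lemma on ideals generated by part of a system of parameters}.
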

\begin{proof}
We claim that every system of parameters is a $d$-sequence. Since $H_\m^i(R)$ has finite local cohomology, by Theorem \ref{FLCandd-sequence} $(1)\Rightarrow(2)$, there exists $N$ such that every system of parameters contained in $\m^N$ is a $d$-sequence. Let $x_1,\dots,x_n$ be an arbitrary system of parameters. Note that by Lemma \ref{lemma on ideals generated by part of a system of parameters}, $(x_1,\dots,x_i)$ is Frobenius closed for every $i$. We want to show $(x_1,\dots,x_{i-1}):x_ix_j=(x_1,\dots,x_{i-1}):x_j$. One containment is obvious. For the other one, let $y\in(x_1,\dots,x_{i-1}):x_ix_j$, for $q\geq N$, we have
\begin{eqnarray*}
&&yx_ix_j\in(x_1,\dots,x_{i-1})\\
&\Rightarrow&y^qx_i^qx_j^q\in(x_1^q,\dots,x_{i-1}^q)\\
&\Rightarrow&y^qx_j^q\in(x_1^q,\dots,x_{i-1}^q)\\
&\Rightarrow&yx_j\in(x_1,\dots,x_{i-1})\\
&\Rightarrow&y\in(x_1,\dots,x_{i-1}):x_j
\end{eqnarray*}
where the third line we use $(x_1^q,\dots,x_n^q)$ is a $d$-sequence (because $q\geq N$), the fourth line we use that $(x_1,\dots,x_{i-1})$ is Frobenius closed.
\end{proof}


We briefly review the $\Gamma$-construction introduced in \cite{HochsterHunekeFRegularityTestElementsBaseChange}. Let $K$ be a field of positive characteristic $p>0$ with a $p$-base $\Lambda$. Let $\Gamma$ be a fixed cofinite subset of $\Lambda$. For $e\in \mathbb{N}$ we denote by $K^{\Gamma,e}$ the purely inseparable field extension of $K$ that is the result of adjoining $p^e$-th roots of all elements in $\Gamma$ to $K$. Now for $(R,\m)$ a complete local ring with $K\subseteq R$ a coefficient field, let $x_1,\dots,x_n$ be a system of parameters for $R$. We know that $R$ is module-finite over $A=K[[x_1,\dots,x_n]]\subseteq R$. Let $A^\Gamma$ denote $\bigcup_{e\in\mathbb{N}}K^{\Gamma,e}[[x_1,\dots,x_n]]$, which is a regular local ring that is faithfully flat and purely inseparable over $A$. The maximal ideal of $A$ expands to that of $A^{\Gamma}$. Let $R^{\Gamma}=A^{\Gamma}\otimes_AR$, which is module-finite over the regular ring $A^{\Gamma}$ and is faithfully flat and purely inseparable over $R$. The maximal ideal of $R$ expands to the maximal ideal of $R^{\Gamma}$ and the residue field of $R^{\Gamma}$ is $K^{\Gamma}=\bigcup_{e\in\mathbb{N}}K^{\Gamma,e}$.

We will use the important fact that $R^\Gamma$ is $F$-finite (see \cite{HochsterHunekeFRegularityTestElementsBaseChange} for details). Moreover, we can preserve many good properties of $R$ when $\Gamma$ is sufficiently small. For example, if $(R,\m)$ is complete and $F$-injective, then $R^\Gamma$ is still $F$-injective for any sufficiently small choice of cofinite $\Gamma$ by Lemma 2.9 in \cite{EnescuHochsterTheFrobeniusStructureOfLocalCohomology}.

\vspace{1em}

Now we state and prove our main result in characteristic $p>0$.

\begin{theorem}
\label{main theorem 2}
Let $(R,\m)$ be a local ring of equal characteristic $p>0$ and dimension $n$. Suppose $R$ has finite local cohomology. Then the following are equivalent:
\begin{enumerate}
\item $R$ is $F$-injective.
\item Every ideal generated by a system of parameters is Frobenius closed.
\end{enumerate}
\end{theorem}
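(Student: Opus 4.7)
\medskip
\noindent\textbf{Proof plan.} The plan is to prove (1) $\Leftrightarrow$ (2) by treating the two directions separately. In both, the FLC hypothesis (via Theorem \ref{FLCandd-sequence}) is used to restrict attention to standard systems of parameters (those contained in $\m^N$ for appropriate $N$), which form $d$-sequences and enjoy a tight relationship between limit-closure subquotients and lower local cohomology. Lemmas \ref{lemma on ideals generated by part of a system of parameters} and \ref{lemma on Frobenius action on top local cohomology} supply the central tools.

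For the direction (2) $\Rightarrow$ (1), assume every sop generates a Frobenius closed ideal. Lemma \ref{lemma on Frobenius action on top local cohomology} immediately yields Frobenius injectivity on $H_\m^n(R)$ (for $n = \dim R$) and on $H_\m^{\depth R}(R)$. For an intermediate $i$ with $\depth R < i < n$, I would fix a standard sop $\underline{x} \subseteq \m^N$ as in Theorem \ref{FLCandd-sequence}, so that each Frobenius power $\underline{x}^{[q]}$ is again a standard $d$-sequence contained in $\m^N$. By Lemma \ref{lemma on ideals generated by part of a system of parameters}, every partial-sop ideal of $\underline{x}^{[q]}$ is Frobenius closed. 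Realizing $H_\m^i(R) = \varinjlim_q H^i(K(\underline{x}^{[q]}; R))$ with its induced Frobenius action, the $d$-sequence Koszul-cohomology structure decomposes each $H^i(K(\underline{x}^{[q]}; R))$ into subquotients built out of $R/(\text{partial sop})$ pieces, all Frobenius closed; this forces Frobenius to be injective on every term of the direct system, hence on the limit $H_\m^i(R)$.

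For the direction (1) $\Rightarrow$ (2), given $R$ $F$-injective and any sop $\underline{x}$, suppose $y \in (\underline{x})^F$. The class $[y/(x_1 \cdots x_n)] \in H_\m^n(R)$ is killed by some Frobenius power, hence zero by $F$-injectivity on the top. This places $y$ in the limit closure
\[
(\underline{x})^{\mathrm{lim}} := \bigcup_{s \geq 0} \bigl((x_1^{s+1}, \ldots, x_n^{s+1}) : (x_1 \cdots x_n)^s\bigr).
\]
Under FLC there is a Frobenius-equivariant map (ultimately from Schenzel-style limit-closure formulas for generalized Cohen-Macaulay modules) from $(\underline{x})^{\mathrm{lim}}/(\underline{x})$ into a direct sum of lower local cohomology modules $\bigoplus_{j < n} H_\m^j(R)$. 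The image of $y$ is Frobenius-nilpotent, so vanishes by $F$-injectivity on each $H_\m^j(R)$ with $j < n$, forcing $y \in (\underline{x})$.

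The main obstacle in both directions is the Schenzel-style Koszul/limit-closure identification under FLC: in $(2)\Rightarrow(1)$ one needs Koszul cohomology of a $d$-sequence to decompose into F-closed subquotients; in $(1)\Rightarrow(2)$ one needs the Frobenius-equivariant map from the limit-closure subquotient into the direct sum of lower local cohomologies. Both rest on the FLC-based access to standard sops guaranteed by Theorem \ref{FLCandd-sequence}, together with detailed Koszul-cohomology computations for $d$-sequences.
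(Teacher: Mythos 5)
Your proposal shows a reasonable feel for the relevant tools (FLC, $d$-sequences, the limit closure, Schenzel-style structure theory), but both directions hinge on unestablished assertions that are genuine gaps, and the overall route is quite different from, and less complete than, the paper's argument.

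For $(2)\Rightarrow(1)$: the paper first uses Proposition~\ref{s.o.p Frobenius closed implies Buchsbaum} to conclude $R$ is Buchsbaum, which unlocks Schenzel's explicit formula
$H_\m^i(R)\cong \frac{(x_1,\dots,x_i):I}{(x_1,\dots,x_i)+\sum_j(x_1,\dots,\widehat{x_j},\dots,x_i):I}$
for an arbitrary (hence standard) sop, and then Goto's identity rewriting the denominator as $(x_1^k,\dots,x_i^k):(x_1\cdots x_i)^{k-1}$; Frobenius injectivity then follows directly from Frobenius closure of the ideal $(x_1^k,\dots,x_i^k)$ via Lemma~\ref{lemma on ideals generated by part of a system of parameters}. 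Your substitute claim --- that the $d$-sequence structure decomposes each $H^i(K(\underline{x}^{[q]};R))$ into subquotients built from $R/(\text{partial sop})$ which are Frobenius closed, and that this forces Frobenius to be injective term by term --- is neither a known decomposition nor accompanied by an argument. The Frobenius action on $H_\m^i(R)$ is a compatible system of maps $H^i(K(\underline{x}^{[q]};R))\to H^i(K(\underline{x}^{[pq]};R))$ between different modules, and Frobenius closure of partial-sop ideals does not by itself give injectivity of these maps. You also skip the Buchsbaum step entirely, which is what makes the paper's formulas applicable to all sops, not just those in $\m^N$.

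For $(1)\Rightarrow(2)$: placing $y$ in the limit closure $(\underline{x})^{\lim}$ via $F$-injectivity of $H_\m^n(R)$ is correct, but the rest is a gap. The asserted ``Frobenius-equivariant map from $(\underline{x})^{\lim}/(\underline{x})$ into $\bigoplus_{j<n}H_\m^j(R)$'' is not constructed, and there are three problems. First, a Schenzel-type isomorphism between $(\underline{x})^{\lim}/(\underline{x})$ and a sum of lower local cohomologies holds only for \emph{standard} sops (those in $\m^N$, or in a Buchsbaum ring); you need the conclusion for arbitrary sops, and restricting to standard ones does not obviously yield the general case. Second, Frobenius compatibility is delicate because Frobenius replaces $\underline{x}$ by $\underline{x}^{[p]}$, so you would be comparing $(\underline{x})^{\lim}/(\underline{x})$ to $(\underline{x}^{[p]})^{\lim}/(\underline{x}^{[p]})$, and you would have to verify that this commutes with the decomposition; the paper is extremely careful about exactly this point in its $(2)\Rightarrow(1)$ direction. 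Third, you would need injectivity of the map, i.e.\ an isomorphism, which is circular since it is usually derived from Buchsbaumness, the very thing one is trying to conclude. The paper's actual $(1)\Rightarrow(2)$ proof is along a completely different and more subtle line: reduce to the complete $F$-finite case via the $\Gamma$-construction, use the FLC hypothesis to show that $R^{1/q}$ is a Buchsbaum module over a Cohen regular subring $A$ for $q\gg 0$, use $F$-injectivity (giving the short exact sequences $0\to H_\m^i(R)\to H_\m^i(R^{1/q})\to H_\m^i(R^{1/q}/R)\to 0$) together with the surjectivity criterion (Theorem~\ref{surjectivity criterion for Buchsbaum modules}) to conclude $R^{1/q}/R$ is Buchsbaum as well, and then compare lengths and multiplicities to force $R/(\underline{x})\to R^{1/q}/(\underline{x})R^{1/q}$ to be injective for $q\gg 0$, which is exactly Frobenius closure of $(\underline{x})$. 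None of these ideas appear in your proposal.
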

\begin{proof}
We first prove $(1)\Rightarrow(2)$. Since $R$ is $F$-injective, so is $\widehat{R}$. We apply the $\Gamma$-construction to $\widehat{R}$: $\widehat{R}^\Gamma$ is $F$-finite and $F$-injective for any sufficiently small choice of cofinite $\Gamma$. Now we consider $S=\widehat{\widehat{R}^\Gamma}$, we know that $S$ is complete, $F$-finite, $F$-injective and faithfully flat over $R$. If we can show that every ideal generated by a system of parameters in $S$ is Frobenius closed, then the same follows for $R$ because for every $I\subseteq R$ generated by a system of parameters, we have $I^F\subseteq (IS)^F\bigcap R=IS\bigcap R=I$. Therefore, to prove $(1)\Rightarrow(2)$, we may replace $R$ by $S$ and hence assume that $R$ is complete and $F$-finite. Note that in this case since $R$ is $F$-injective, we know $R$ is reduced (for example, see Remark 2.6 in \cite{SchwedeandZhangBertinitheoremsforFsingularities}), hence we may also assume without loss of generality that $R$ is reduced.

Let $R^{1/q}$ denote the ring obtained by adjoining all $q$-th roots of elements of $R$ where $q=p^e$. Since $R$ is reduced, we have a short exact sequence \[0\rightarrow R\rightarrow R^{1/q}\rightarrow R^{1/q}/R\rightarrow 0\] which induces a long exact sequence of local cohomology
\begin{equation}
\label{long exact sequence of local cohomology}
\cdots\rightarrow H_\m^{i-1}(R^{1/q}/R)\xrightarrow{\phi_i} H_\m^i(R)\rightarrow H_\m^i(R^{1/q})\rightarrow H_\m^i(R^{1/q}/R)\rightarrow\cdots.
\end{equation}
Because $R$ is $F$-injective, each $H_\m^i(R)\rightarrow H_\m^i(R^{1/q})$ is injective. This means each connecting map $\phi_i$ is the zero map. So (\ref{long exact sequence of local cohomology}) actually gives us $n-1$ short exact sequences:
\begin{equation}
\label{short exact sequence of local cohomology}
0 \rightarrow H_\m^i(R)\rightarrow H_\m^i(R^{1/q})\rightarrow H_\m^i(R^{1/q}/R)\rightarrow0
\end{equation}
for every $0\leq i\leq n-1$.

Let $x_1,\dots,x_n$ be any system of parameters, we want to show that $(x_1,\dots,x_n)$ is Frobenius closed. Since $R$ is complete, we may pick a coefficient field $K\cong R/\m$ of $R$, and by Cohen's structure theorem, $R$ is module finite over $A=K[[x_1,\dots,x_n]]$. Hence $R^{1/q}$ is also module finite over $A$ for every $q=p^e$ since $R$ is $F$-finite. By Theorem \ref{FLCandd-sequence} $(1)\Rightarrow(2)$, if $q>N$, every system of parameters in $R$ satisfies \[(x_1^q,\dots,x_{i-1}^q):_R\m^{[q]}\subseteq(x_1^q,\dots,x_{i-1}^q):_Rx_i^q=(x_1^q,\dots,x_{i-1}^q):_R\m^N\subseteq(x_1^q,\dots,x_{i-1}^q):_R\m^{[q]}.\] So we must have equalities. But after taking $q$-th roots, this implies
\[(x_1,\dots,x_{i-1})R^{1/q}:_{R^{1/q}}x_i=(x_1,\dots,x_{i-1})R^{1/q}:_{R^{1/q}}\m.\] In particular, this implies that when $q>N$, $R^{1/q}$ is a (finitely generated) Buchsbaum $R$-module of dimension $n$, and hence also a Buchsbaum $A$-module of dimension $n$.

Now we claim that for every $q>N$, $R^{1/q}/R$ is also a Buchsbaum module of dimension $n$ over $A$. We prove this using the surjectivity criterion of Buchsbaum modules (Theorem \ref{surjectivity criterion for Buchsbaum modules}). We have the following commutative diagram, which are the long exact sequences of $\Ext_A^i(K,-)$ and $H_\m^i(-)$ induced by $0\to R\to R^{1/q}\to R^{1/q}/R\to 0$:
\[  \xymatrix{
    \cdots \ar[r]  & \Ext_A^i(K, R) \ar[r] \ar[d] &  \Ext_A^i(K, R^{1/q})  \ar[r]  \ar[d]^{\alpha_i}   &  \Ext_A^i(K, R^{1/q}/R)  \ar[r] \ar[d]^{\beta_i} &  \cdots \\
    0\ar[r]  & H_\m^i(R) \ar[r]   &  H_\m^i(R^{1/q}) \ar[r] & H_\m^i(R^{1/q}/R) \ar[r] & 0 
} \]
where the bottom sequence is exact by (\ref{short exact sequence of local cohomology}). Since $R^{1/q}$ is a Buchsbaum $A$-module of dimension $n$ and $A$ is a regular local ring, we know that for each $0\leq i\leq n-1$, $\alpha_i$ is surjective. So by the commutativity of the above diagram, each $\beta_i$ is also surjective. Hence $R^{1/q}/R$ is a Buchsbaum $A$-module of dimension $n$ for every $q>N$.

Now we apply Theorem \ref{FLCandd-sequence} $(1)\Rightarrow(4)$ for $(\underline{x})=(x_1,\dots,x_n)$, we have
\begin{equation}
\label{3.5.3}
l(R/(\underline{x}))-e(\underline{x}, R)\leq \sum_{i=1}^{n-1}\binom{n-1}{i}l(H_\m^i(R))
\end{equation}
Since $R^{1/q}$ and $R^{1/q}/R$ are Buchsbaum modules over $A$, we know from Bemerkung (4.2) in \cite{SchenzelTrungVerallgemeinerteCohenMacaulayModuln} that
\begin{equation}
\label{3.5.4}
l(R^{1/q}/(\underline{x})R^{1/q})-e(\underline{x}, R^{1/q})=\sum_{i=1}^{n-1}\binom{n-1}{i}l(H_\m^i(R^{1/q}))
\end{equation}
and
\begin{equation}
\label{3.5.5}
l(\D\frac{R^{1/q}/R}{(\underline{x})(R^{1/q}/R)})-e(\underline{x}, R^{1/q}/R)=\sum_{i=1}^{n-1}\binom{n-1}{i}l(H_\m^i(R^{1/q}/R))
\end{equation}
where the length $l(-)$ and multiplicity $e(-)$ are considered as length and multiplicity computed over $A$, which are the same as the length and multiplicity computed over $R$ since $R$ is module finite over $A$ with the same residue field. Now we consider (\ref{3.5.3})-(\ref{3.5.4})+(\ref{3.5.5}). The left hand side is just \[l(R/(\underline{x}))-l(R^{1/q}/(\underline{x})R^{1/q})+l(\D\frac{R^{1/q}/R}{(\underline{x})(R^{1/q}/R)})\] because the multiplicities cancel. The right hand side is zero because of (\ref{short exact sequence of local cohomology}). Hence we know that
\begin{equation}
\label{3.5.6}
l(R/(\underline{x}))+l(\D\frac{R^{1/q}/R}{(\underline{x})(R^{1/q}/R)})\leq l(R^{1/q}/(\underline{x})R^{1/q}).
\end{equation}

On the other hand, we can also apply $\otimes_RR/(\underline{x})$ to the short exact sequence \[0\to R\to R^{1/q}\to R^{1/q}/R\to 0,\] and we get \[\Tor_1^R(R/(\underline{x}), R^{1/q}/R)\xrightarrow{\varphi} R/(\underline{x})\rightarrow R^{1/q}/(\underline{x})R^{1/q}\rightarrow \D\frac{R^{1/q}/R}{(\underline{x})(R^{1/q}/R)}\rightarrow 0.\] So (\ref{3.5.6}) implies that $\varphi$ must be the zero map. Hence for every $q>N$, we have an injection $0\to R/(\underline{x})\rightarrow R^{1/q}/(\underline{x})R^{1/q}$. But this map is the same as the Frobenius map: $0\to R/(\underline{x})\rightarrow R/(\underline{x^q})$. Now if $y\in (x_1,\dots,x_n)^F$, then $y^q\in (x_1^q,\dots,x_n^q)$ for some $q>N$, so $\overline{y}$ maps to $0$ under $0\to R/(\underline{x})\rightarrow R/(\underline{x^q})$, hence $y\in(x_1,\dots,x_n)$. This proves that every ideal generated by a system of parameters is Frobenius closed.

\vspace{1em}


Now we prove $(2)\Rightarrow(1)$. Since every ideal generated by a system of parameters is Frobenius closed, by Proposition \ref{s.o.p Frobenius closed implies Buchsbaum} we know that $R$ is Buchsbaum. But since $R$ is Buchsbaum, we know that every system of parameters is a {\it standard} system of parameters in the sense of Schenzel \cite{Schenzelstandardsystemsofparemeters} (see Corollary 3.6 in \cite{Schenzelstandardsystemsofparemeters}). Hence by Proposition 3.3 in \cite{Schenzelstandardsystemsofparemeters}, we know that for every ideal $I=(x_1,\dots,x_n)$ generated by a system of parameters in $R$ and every $0\leq i\leq n-1$, there are natural isomorphisms:
\begin{equation}
\label{3.6.7}
H_\m^i(R)\cong \D\frac{(x_1,\dots,x_i):I}{(x_1,\dots,x_i)+\sum_{j=1}^i(x_1,\dots,\widehat{x_j},\dots,x_i):I}
\end{equation}
Now we observe that one can view the Frobenius map on $H_\m^i(R)$ as the natural map $H_\m^i(R)\to H_\m^i(R^{1/p})$ and then identify $R^{1/p}$ with $R$. It is straightforward to check that under (\ref{3.6.7}), the Frobenius action on $H_\m^i(R)$ is the same as the map
\[\frac{(x_1,\dots,x_i):I}{(x_1,\dots,x_i)+\sum_{j=1}^i(x_1,\dots,\widehat{x_j},\dots,x_i):I}\to \frac{(x_1^p,\dots,x_i^p):I^{[p]}}{(x_1^p,\dots,x_i^p)+\sum_{j=1}^i(x_1^p,\dots,\widehat{x_j^p},\dots,x_i^p):I^{[p]}}\] sending $\overline{y}$ to $\overline{y^p}$.

So in order to show that Frobenius acts injectively on $H_\m^i(R)$ for $1\leq i\leq n-1$, it suffices to show that if $y^p\in (x_1^p,\dots,x_i^p)+\sum_{j=1}^i(x_1^p,\dots,\widehat{x_j^p},\dots,x_i^p):I^{[p]}$, then $y\in (x_1,\dots,x_i)+\sum_{j=1}^i(x_1,\dots,\widehat{x_j},\dots,x_i):I$. But since $R$ is Buchsbaum, we know that $(x_1,\dots,x_i):I=(x_1,\dots,x_i):\m$ is the unmixed component of $(x_1,\dots,x_i)$ in its primary decomposition (we refer to \cite{GotoontheassociatedgradedringsofparameteridealsinBuchsbaumrings}, page 502-503 for a more detailed explanation of this). Now by Theorem 4.7 in \cite{GotoontheassociatedgradedringsofparameteridealsinBuchsbaumrings}, for every $1\leq i\leq n-1$ and every fixed $k\geq 2$, we have
\begin{equation}
\label{3.6.8}
(x_1,\dots,x_i)+\sum_{j=1}^i(x_1,\dots,\widehat{x_j},\dots,x_i):I=(x_1^k,\dots,x_i^k):(x_1x_2\cdots x_i)^{k-1}.
\end{equation}
So for every $1\leq i\leq n-1$, we have
\begin{eqnarray*}
&&y^p\in (x_1^p,\dots,x_i^p)+\sum_{j=1}^i(x_1^p,\dots,\widehat{x_j^p},\dots,x_i^p):I^{[p]}\\
&\Rightarrow&y^p(x_1^px_2^p\cdots x_i^p)^{k-1}\in(x_1^{pk},\dots,x_i^{pk})\\
&\Rightarrow&y(x_1x_2\cdots x_i)^{k-1}\in(x_1^k,\dots,x_i^k)\\
&\Rightarrow&y\in (x_1^k,\dots,x_i^k):(x_1x_2\cdots x_i)^{k-1}\\
&\Rightarrow&y\in (x_1,\dots,x_i)+\sum_{j=1}^i(x_1,\dots,\widehat{x_j},\dots,x_i):I\\
\end{eqnarray*}
where the last implication is by (\ref{3.6.8}), and the second implication we use the fact that $(x_1^k,\dots,x_i^k)$ is Frobenius closed (by Lemma \ref{lemma on ideals generated by part of a system of parameters}). Hence considering (\ref{3.6.7}), we have already showed that Frobenius acts injectively on each $H_\m^i(R)$ when $1\leq i\leq n-1$.

It remains to show that the Frobenius acts injectively on $H_\m^0(R)$ and $H_\m^n(R)$. But since every ideal generated by a system of parameters is Frobenius closed, we know that $R$ is reduced by the same argument as in the proof of Lemma \ref{lemma on ideals generated by part of a system of parameters}. So we know that $\depth R\geq 1$ and hence $H_\m^0(R)=0$. Furthermore Frobenius acts injectively on $H_\m^n(R)$ by Lemma \ref{lemma on Frobenius action on top local cohomology}. This completes the proof of $(2)\Rightarrow(1)$.
\end{proof}

Finally we can give a positive answer to Question \ref{Takagi's question}.

\begin{corollary}
\label{main corollary 2}
Let $(R,\m)$ be a local ring of equal characteristic $p>0$. Suppose $R$ is $F$-injective and $R$ has finite local cohomology. Then $R$ is Buchsbaum.
\end{corollary}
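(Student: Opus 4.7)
The plan is extremely short because all the heavy machinery has already been assembled: the corollary follows by chaining Theorem \ref{main theorem 2} with Proposition \ref{s.o.p Frobenius closed implies Buchsbaum}. So I would not introduce anything new; I would simply compose these two statements in the correct order and check that the hypotheses line up.

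Concretely, I would argue as follows. First, since $(R,\m)$ has equal characteristic $p>0$, is $F$-injective, and has finite local cohomology, the hypotheses of Theorem \ref{main theorem 2} are satisfied, so the implication $(1)\Rightarrow(2)$ of that theorem gives that every ideal generated by a system of parameters of $R$ is Frobenius closed. Second, this condition together with the finite local cohomology hypothesis is exactly what is needed in Proposition \ref{s.o.p Frobenius closed implies Buchsbaum}, which then yields that $R$ is Buchsbaum.

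Since the two inputs have already been proved, there is no genuine obstacle here; the only thing to be careful about is that one must verify that the finite local cohomology hypothesis is used both times (it is a hypothesis in Theorem \ref{main theorem 2} and again in Proposition \ref{s.o.p Frobenius closed implies Buchsbaum}), so it should be carried through and not dropped after the first application. No additional lemma, reduction, or $\Gamma$-construction is needed, and the proof I would write is literally two sentences invoking these two earlier results in sequence.
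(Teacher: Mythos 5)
Your proposal is exactly the paper's proof: apply Theorem \ref{main theorem 2} $(1)\Rightarrow(2)$ to conclude that every parameter ideal is Frobenius closed, then feed that (together with finite local cohomology) into Proposition \ref{s.o.p Frobenius closed implies Buchsbaum}. Nothing more is required, and your caution about carrying the finite local cohomology hypothesis through both steps is correct.
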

\begin{proof}
This follows immediately from Proposition \ref{s.o.p Frobenius closed implies Buchsbaum} and Theorem \ref{main theorem 2}.
\end{proof}

\begin{remark}
It is quite natural to ask whether $F$-injectivity is always equivalent to the assertion that every ideal generated by a system of parameters is Frobenius closed. We don't have a counter example yet.
\end{remark}

\section{Characteristic $0$ results}
In this section we study Question \ref{Takagi's question in char 0}, and we provide a positive answer when $R$ is a section ring of a normal projective variety, hence in particular we answer this question when $R$ is normal and standard graded (we say $R$ is standard graded if $R_0=K$ and $R$ is generated over $R_0$ by $R_1$). Throughout this section all rings and schemes are of finite type over a field $K$ of characteristic $0$. All schemes are separated. We first recall the definition of (strong) log resolutions. Let $X$ be a closed subscheme of $Y$ with ideal sheaf $\mathscr{I}$. A morphism $\pi$: $\widetilde{Y}\rightarrow Y$ is called a {\it log resolution} of the pair $(Y, X)$ if
\begin{enumerate}
\item $\pi$ is proper and birational with $\widetilde{Y}$ smooth
\item $\mathscr{I} O_{\widetilde{Y}}=O_{\widetilde{Y}}(-G)$ is an invertible sheaf corresponding to a divisor $-G$
\item $\Supp(G)\bigcup E$ has simple normal crossings where $E$ is the exceptional set of $\pi$.
\end{enumerate}
When $X=\emptyset$, we simply say $\pi$ is a log resolution of $Y$. We say $\pi$ is a {\it strong log resolution} of the pair $(Y, X)$ if moreover $\pi$ is an isomorphism outside of $X$. We note that when the characteristic of $K$ is $0$, log resolutions always exist and strong log resolutions exist if $Y$ is smooth (see \cite{HironakaResolution}).

We now recall Schwede's characterization of Du Bois singularities, which was shown to be equivalent to the classical definition using Hodge theoretic methods in \cite{SchwedeEasyCharacterization}.
\begin{definition}[{\it cf.} Theorem 4.6 in \cite{SchwedeEasyCharacterization}]
\label{Schwede's simple characterization of Du Bois}
Let $X\hookrightarrow Y$ be a reduced closed subscheme of a smooth scheme $Y$. Let $\pi$: $\widetilde{Y}\rightarrow Y$ be a strong log resolution of $(Y, X)$ and let $E$ be the reduced pre-image of $X$ in $\widetilde{Y}$. Then $X$ has {\it Du Bois} singularities if and only if the natural map $O_X\rightarrow \mathbf{R}\pi_{*}O_E$ is a quasi-isomorphism.
\end{definition}

Moreover, when $X$ is a Cohen-Macaulay normal scheme, there is another simple criterion for Du Bois singularities proved in \cite{KovacsSchwedeSmithLCImpliesDuBois} which is also useful.
\begin{theorem}[{\it cf.} Theorem 3.1 in \cite{KovacsSchwedeSmithLCImpliesDuBois}]
\label{canonicl sheaf of Du Bois singularity}
Suppose that $X$ is normal and Cohen-Macaulay. Let $\pi$: $X'\rightarrow X$ be any log resolution of $X$, and denote the reduced exceptional divisor of $\pi$ by $G$. Then $X$ has Du Bois singularities if and only if $\pi_*\omega_{X'}(G)\cong \omega_X$.
\end{theorem}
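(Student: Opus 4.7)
The plan is to dualize Schwede's characterization (Definition \ref{Schwede's simple characterization of Du Bois}) using Grothendieck duality for the log resolution $\pi\colon X'\to X$, exploiting the Cohen-Macaulay hypothesis to reduce $\omega_X^\bullet$ to the shifted sheaf $\omega_X[n]$. Under this reduction, the dualized Du Bois condition should read exactly as the asserted equality $\pi_*\omega_{X'}(G)\cong\omega_X$.

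First I would produce what one could call the ``sheaf-level'' short exact sequence. From $0\to\O_{X'}(-G)\to\O_{X'}\to\O_G\to 0$ on the smooth $X'$, applying $\mathbf{R}\sHom(-,\omega_{X'})$ and using the adjunction $\mathcal{E}xt^1(\O_G,\omega_{X'})=\omega_G=\omega_{X'}(G)|_G$ gives $0\to\omega_{X'}\to\omega_{X'}(G)\to\omega_G\to 0$. Pushing forward by $\pi$ and invoking Grauert--Riemenschneider ($R^i\pi_*\omega_{X'}=0$ for $i>0$) together with its logarithmic version $R^i\pi_*\omega_{X'}(G)=0$ for $i>0$ (Esnault--Viehweg), all higher direct images vanish, yielding
\[ 0\to\pi_*\omega_{X'}\to\pi_*\omega_{X'}(G)\to\pi_*\omega_G\to 0, \]
with both left-hand terms naturally subsheaves of $\omega_X$ (they agree with $\omega_X$ over the regular locus of $X$, and $\omega_X$ is $S_2$).

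Next I would encode the Du Bois condition via a distinguished triangle. For a log resolution of a normal $X$ with reduced exceptional divisor $G$, the Du Bois complex sits in a distinguished triangle
\[ \underline{\Omega}^0_X\to\mathbf{R}\pi_*\O_{X'}\to\mathbf{R}\pi_*\O_G\xrightarrow{+1}, \]
equivalent to Schwede's Definition \ref{Schwede's simple characterization of Du Bois} via a cubical-hyperresolution comparison. Applying $\mathbf{R}\sHom_X(-,\omega_X[n])$ together with Grothendieck duality for $\pi$, using $\omega_X^\bullet=\omega_X[n]$ (CM) and the computation $\mathbf{R}\sHom_{X'}(\O_G,\omega_{X'}[n])=\omega_G[n-1]$, this triangle dualizes to one whose long exact cohomology sequence (combined with the vanishings above) reads exactly
\[ 0\to\pi_*\omega_{X'}\to\omega_X\to\pi_*\omega_G\to 0 \]
precisely when $\underline{\Omega}^0_X\cong\O_X$, i.e.\ when $X$ is Du Bois. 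Comparing this sequence with the sheaf-level one of the previous paragraph, the five lemma then forces $\pi_*\omega_{X'}(G)\cong\omega_X$; reversing the logic, the assumed equality $\pi_*\omega_{X'}(G)=\omega_X$ propagates through the dual triangle and forces $\underline{\Omega}^0_X\cong\O_X$, hence Du Bois.

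The hardest step will be establishing the distinguished triangle above rigorously: Schwede's criterion is stated via an ambient strong log resolution of $X\subseteq Y$, so reformulating it in terms of a single log resolution of $X$ itself requires a cubical-hyperresolution comparison and relies on normality of $X$. Tracking the shifts through Grothendieck duality is also delicate, and the Cohen-Macaulay hypothesis on $X$ is essential precisely because otherwise $\omega_X^\bullet$ would have cohomology in several degrees and the dual of the Du Bois condition would involve more than a single equality of sheaves.
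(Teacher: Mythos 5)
The paper does not prove this theorem; it is quoted (``cf.'') from Theorem 3.1 of Kov\'acs--Schwede--Smith \cite{KovacsSchwedeSmithLCImpliesDuBois}, so there is no internal argument to compare against. Your strategy---dualize the Du Bois condition through Grothendieck duality, use the Cohen--Macaulay hypothesis to collapse $\omega_X^\bullet$ to $\omega_X[n]$, and compare with a Grauert--Riemenschneider short exact sequence---is essentially the one KSS carry out, but the central ingredient you write down, the distinguished triangle
\[ \underline{\Omega}^0_X \longrightarrow \mathbf{R}\pi_*\O_{X'} \longrightarrow \mathbf{R}\pi_*\O_G \xrightarrow{\ +1\ }, \]
is not correct as stated, and the error is fatal rather than cosmetic. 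The actual triangle, which goes back to Du Bois and is the one used in \cite{KovacsSchwedeSmithLCImpliesDuBois}, carries an extra summand on the base: if $\Sigma\subseteq X$ is a closed subset containing the locus over which $\pi$ is not an isomorphism and $G=\pi^{-1}(\Sigma)_{\mathrm{red}}$, then
\[ \underline{\Omega}^0_X \longrightarrow \underline{\Omega}^0_\Sigma \oplus \mathbf{R}\pi_*\O_{X'} \longrightarrow \mathbf{R}\pi_*\O_G \xrightarrow{\ +1\ }. \]
Already for $X=\mathbb{A}^2$ and $\pi$ the blow-up of the origin your version fails: $\underline{\Omega}^0_X=\O_X$, $\mathbf{R}\pi_*\O_{X'}=\O_X$, and the natural map between them is an isomorphism, so its cone is zero---not $\mathbf{R}\pi_*\O_{\mathbb{P}^1}$, which is the residue field in degree zero. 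The $\underline{\Omega}^0_\Sigma$ term is exactly what supplies the missing piece.

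This changes the rest of the outline materially. Upon dualizing you pick up a term $\mathbf{R}\sHom(\underline{\Omega}^0_\Sigma,\omega_X^\bullet)$; the observation that it contributes nothing in cohomological degree $-n$ (because $\dim\Sigma<n$) is precisely what identifies $h^{-n}\mathbf{R}\sHom(\underline{\Omega}^0_X,\omega_X^\bullet)$ with $\pi_*\omega_{X'}(G)$, and it is this dimension count---not a five-lemma comparison of two short exact sequences---that does the work. The Cohen--Macaulay hypothesis then gives the forward implication as you describe, but for the converse your outline also needs a reason why an isomorphism in the single degree $-n$ forces $\underline{\Omega}^0_X\cong\O_X$, which you do not address. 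In short, the plan is recoverable, but the triangle you rely on is wrong, and the ``compare via the five lemma'' step quietly absorbs the degree bookkeeping that the missing $\underline{\Omega}^0_\Sigma$ imposes.
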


Now we are ready to prove our main result in characteristic $0$. It follows from some more general results. The first one is a Kodaira vanishing result for normal Cohen-Macaulay Du Bois singularities. This is well known to experts, and follows from a more general (and harder) result of Patakfalvi (Theorem 1.3 of \cite{PatakfalviSeminegativityDuBois}). But we also give a short, different proof for completeness. In fact the result follows easily from one of Fujino's vanishing theorems.
\begin{theorem}
\label{Kodaira vanishing for CM Du Bois}
Let $X$ be a normal projective scheme which is Cohen-Macaulay and Du Bois. Then $H^i(X, \omega_X\otimes\mathscr{L})=0$ for every ample line bundle $\mathscr{L}$ and every $i>0$.
\end{theorem}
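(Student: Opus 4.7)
The plan is to reduce the desired vanishing to a Kodaira-type statement for Du Bois projective schemes via Serre duality, and then to invoke Fujino's vanishing theorem applied to a strong log resolution of $X$.

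Since $X$ is projective and Cohen-Macaulay of dimension $n$, Serre duality supplies isomorphisms $H^i(X,\omega_X\otimes\mathscr{L})\cong H^{n-i}(X,\mathscr{L}^{-1})^{\vee}$ for every $i$. Hence it suffices to show that $H^j(X,\mathscr{L}^{-1})=0$ for every $0\leq j<n$.

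To prove this, I would embed $X$ as a closed subscheme of a smooth projective scheme $Y$ and take a strong log resolution $\pi\colon\widetilde{Y}\to Y$ of $X$. Let $E\subseteq\widetilde{Y}$ denote the reduced preimage of $X$; then $E$ is a projective simple normal crossings variety of dimension $n$. By Definition \ref{Schwede's simple characterization of Du Bois}, the Du Bois hypothesis gives a quasi-isomorphism $\mathcal{O}_X\simeq\mathbf{R}\pi_*\mathcal{O}_E$. Tensoring with the locally free sheaf $\mathscr{L}^{-1}$ and using the projection formula yields $H^j(X,\mathscr{L}^{-1})\cong\mathbb{H}^j(E,\pi^*\mathscr{L}^{-1}|_E)$. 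Since $\mathscr{L}$ is ample on $X$, the pullback $\pi^*\mathscr{L}|_E$ is semi-ample on $E$ and big on $E$ (bigness being supplied by the strict transform of $X$ inside $E$, on which $\pi$ is birational). Fujino's generalization of Kollár's vanishing to simple normal crossings varieties then gives $\mathbb{H}^j(E,\pi^*\mathscr{L}^{-1}|_E)=0$ for $j<\dim E=n$, which concludes the argument.

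The main obstacle is pinpointing the precise form of Fujino's vanishing theorem that applies here, since the line bundle $\pi^*\mathscr{L}|_E$ need not be ample on the exceptional components of $E$. What one needs is the variant for reducible SNC varieties (or, more generally, quasi-log schemes) that asks only for global semi-ampleness and bigness of the line bundle, rather than bigness on every stratum; once this is granted, the reduction via Schwede's Du Bois criterion and Serre duality is essentially formal.
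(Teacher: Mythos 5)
Your high-level plan is a genuinely different route: reduce via Serre duality to $H^j(X,\mathscr{L}^{-1})=0$ for $j<n$, transfer this to hypercohomology on the reduced preimage $E$ via Schwede's characterization (Definition~\ref{Schwede's simple characterization of Du Bois}), and apply a Fujino-type vanishing on $E$. But the step on $E$ does not go through as written. By the definition of a strong log resolution, $\mathscr{I}_X O_{\widetilde{Y}} = O_{\widetilde{Y}}(-G)$ is an invertible ideal, so $E = G_{\mathrm{red}}$ is a divisor in $\widetilde{Y}$ and has pure dimension $\dim Y - 1$, not $n$. Unless $X$ is a hypersurface in $Y$ (which would force $X$ to be Gorenstein, a hypothesis we are not given), $\dim E > n$. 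This has two consequences. First, the vanishing you need, $\mathbb{H}^j(E,(\pi|_E)^*\mathscr{L}^{-1})=0$ for $j<n$, lies in a range strictly below $\dim E$, which no Kodaira--Akizuki--Nakano-type theorem on $E$ alone can deliver. Second, $(\pi|_E)^*\mathscr{L}$ is \emph{not} big on $E$: its Iitaka dimension is $\dim\pi(E)=n<\dim E$. Your proposed source of bigness---the strict transform of $X$, ``on which $\pi$ is birational''---fails precisely because that strict transform has dimension $n$ and is therefore a proper closed subscheme of $E$, not a component. So even the quasi-log-scheme variants of Fujino's vanishing, which weaken bigness on strata, do not apply to $(\pi|_E)^*\mathscr{L}$.

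The ingredient you are missing is Theorem~\ref{canonicl sheaf of Du Bois singularity}, the canonical-sheaf characterization of Du Bois for normal Cohen--Macaulay schemes: $\pi_*\omega_{X'}(G)\cong\omega_X$ for any log resolution $\pi\colon X'\to X$ with reduced exceptional divisor $G$. This lets one bypass both the embedding $X\hookrightarrow Y$ and Serre duality on $X$. The paper takes a log resolution $\pi\colon X'\to X$ of $X$ itself (so $\pi$ is birational and $\dim X'=n$), applies Fujino's vanishing (\cite{FujinoInjectivityvanishingandtorsionfreetheorems}, Theorem~1.1) with $B=G$, $H=\pi^*\mathscr{L}$, $q=0$ to get $H^i(X,\pi_*(O_{X'}(K_{X'}+G)\otimes\pi^*\mathscr{L}))=0$ for $i>0$, and then the projection formula together with $\pi_*\omega_{X'}(G)\cong\omega_X$ finishes. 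Here $\dim X'=n$ and $\pi$ is birational, so none of the dimension or bigness issues arise. If you insist on the $\mathscr{L}^{-1}$-side route through $E$, you would have to Serre-dualize on $E$ (which is at least Gorenstein, being a divisor in the smooth $\widetilde{Y}$) and then combine a Grauert--Riemenschneider-type vanishing $R^k(\pi|_E)_*\omega_E=0$ for $k>\dim E-n$ with a Koll\'ar--Fujino vanishing $H^p(X,R^q(\pi|_E)_*\omega_E\otimes\mathscr{L})=0$ for $p>0$; that is considerably heavier machinery, and the precise statements for the reducible SNC scheme $E$ would still need to be located.
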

\begin{proof}
One of Fujino's vanishing results (for example, see Theorem 1.1 in \cite{FujinoInjectivityvanishingandtorsionfreetheorems}) says that if $f$: $Y\rightarrow Z$ is projective with $Y$ smooth, and $B$ is an effective $\mathbb{Q}$-divisor with coefficients $\leq 1$ with simple normal crossing support, then we have \[H^p(Z, R^qf_*O_Y(K_Y+B+H))=0\] for every $p>0$, $q\geq 0$ if $H=f^*H'$ for some ample $H'$.

Take a log resolution $\pi$: $X'\rightarrow X$ of $X$ with reduced exceptional divisor $G$. In particular $G$ has coefficient $\leq 1$ with simple normal crossing support. Now we apply the above vanishing result to $Y=X'$, $Z=X$, $B=G$, $H=\pi^*\mathscr{L}$, $p=i>0$ and $q=0$. We get
\begin{equation}\label{3}
H^i(X, \pi_*(O_{X'}(K_{X'}+G)\otimes \pi^*\mathscr{L}))=0.
\end{equation}
Since we know that $\pi_*\omega_{X'}(G)\cong\omega_X$ when $X$ is normal Cohen-Macaulay and Du Bois by Theorem
\ref{canonicl sheaf of Du Bois singularity}. Applying the projection formula to (\ref{3}) we get $H^i(X, \omega_X\otimes\mathscr{L})=0.$
\end{proof}

Another key ingredient is the following result which gives a characterization of Du Bois singularities for section rings of ample line bundles. The proof makes use of the ``natural" construction from \cite{EGA}.
\begin{theorem}
\label{characterization of Du Bois in graded case}
Let $Z$ be a normal projective variety over $K$ and let $\mathscr{L}$ be an ample line bundle on $Z$. Let $R=\oplus_{i\in\mathbb{N}} H^0(Z,\mathscr{L}^i)$ be the section ring of $Z$ with respect to $\mathscr{L}$ and $\m$ be the irrelevant maximal ideal of $R$. Then the following are equivalent:
\begin{enumerate}
\item $R$ is Du Bois.
\item $Z$ has Du Bois singularities and $[H_\m^i(R)]_{>0}=0$ for every $i\geq0$.
\end{enumerate}
\end{theorem}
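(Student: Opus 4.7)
The proof exploits the natural construction from \cite{EGA}. Let $Y := \Spec_Z\bigl(\bigoplus_{n\geq 0}\mathscr{L}^n\bigr)$ be the total space of $\mathscr{L}^{-1}$ over $Z$, equipped with its smooth projection $q\colon Y\to Z$ (an $\bA^1$-bundle) and the natural projective birational morphism $\pi\colon Y\to X=\Spec R$ induced by the grading of $R$. Then $\pi$ contracts the zero section $Z\hookrightarrow Y$ to the vertex $v\in X$ and restricts to an isomorphism $Y\setminus Z\simeq X\setminus\{v\}$; in particular $X\setminus\{v\}$ is a $\bG_m$-bundle over $Z$ via $q$. Since $q$ is smooth and Du Bois ascends/descends under smooth morphisms, $X\setminus\{v\}$ is Du Bois iff $Z$ is, so the substance of the theorem is to translate Du Bois of $X$ \emph{at} $v$ into the vanishing condition.

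Pass to a log resolution $\phi\colon\widetilde{Z}\to Z$ and lift the construction to $\widetilde{Y}:=\Spec_{\widetilde{Z}}\bigl(\bigoplus_{n\geq 0}(\phi^*\mathscr{L})^n\bigr)$, smooth as a line bundle over a smooth scheme and equipped with a projective birational morphism $\widetilde{p}\colon\widetilde{Y}\to X$ whose reduced preimage of $v$ is the smooth divisor $\widetilde{Z}$. Embed $X$ in a smooth affine ambient $V$ via homogeneous generators of $R$ (so every construction is $\bG_m$-equivariant), and extend $\widetilde{p}$ to a strong log resolution $\widetilde{\pi}\colon\widetilde{V}\to V$ of $X$ in $V$. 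By Schwede's characterization (Definition~\ref{Schwede's simple characterization of Du Bois}), $R$ is Du Bois iff the natural map $\O_X\to \mathbf{R}\widetilde{\pi}_\ast\O_E$ is a quasi-isomorphism, where $E$ is the reduced preimage of $X$ in $\widetilde{V}$.

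The $\bG_m$-equivariance induces a $\Z$-graded decomposition of both sides, reducing the quasi-isomorphism to a degree-by-degree check at the vertex stalk. The strict-transform component of $E$ is $\widetilde{Y}$, while the remaining exceptional components of $E$ lie over $v$ and thus contribute only to degree $0$. For $n>0$, the projection formula for $q$ together with Schwede's criterion applied to $\phi$ identifies the degree-$n$ piece of $\mathbf{R}\widetilde{\pi}_\ast\O_E$ at the stalk at $v$ with $\mathbf{R}\Gamma(Z,\mathscr{L}^n\otimes\DuBois{Z})$. For $(2)\Rightarrow(1)$: if $Z$ is Du Bois then $\DuBois{Z}\simeq\O_Z$, and the hypothesis $H^j(Z,\mathscr{L}^n)=0$ for $n>0$, $j\geq 1$ (equivalent to $[H_\m^i(R)]_{>0}=0$ by the Serre--Grothendieck formula $[H_\m^i(R)]_n=H^{i-1}(Z,\mathscr{L}^n)$ for $i\geq 2$) collapses the degree-$n$ piece to $H^0(Z,\mathscr{L}^n)=R_n$ concentrated in cohomological degree $0$; the degree-$0$ piece is governed directly by Schwede's characterization of $Z$. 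Assembling the graded pieces, $R$ is Du Bois. For $(1)\Rightarrow(2)$: $Z$ Du Bois follows from smooth descent through $q$, and running the same degree-by-degree analysis in reverse forces the positive-degree vanishing.

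The main obstacle is the careful bookkeeping of the exceptional components of $E$ beyond the strict transform $\widetilde{Y}$. These arise from resolving the ambient embedding and a priori may contribute to $\mathbf{R}\widetilde{\pi}_\ast\O_E$ in complicated ways, but the $\bG_m$-equivariance confines their effect to degree $0$, where their joint contribution is precisely what Schwede's characterization of $Z$ via $\phi$ measures. This clean separation between the degree-$0$ part (which encodes Du Bois-ness of $Z$) and the positive-degree parts (which encode the cohomological vanishing) is the mechanism underlying the equivalence.
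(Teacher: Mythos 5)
Your overall strategy is close to the paper's in spirit (embed $X=\Spec R$ in an affine ambient via homogeneous generators, resolve $\bG_m$-equivariantly, apply Schwede's characterization), but the step you yourself flag as ``the main obstacle'' is exactly where the argument breaks: the assertion that the $\bG_m$-equivariance confines the non-strict-transform components of $E$ to degree $0$, and more precisely that the degree-$n$ piece of $\mathbf{R}\widetilde{\pi}_*\O_E$ at $v$ is $\mathbf{R}\Gamma(Z,\mathscr{L}^n\otimes\DuBois{Z})$, is stated but never proved. In fact two sub-claims are problematic. First, the remaining components of $E$ need not all lie over $v$: if $Z$ is singular, the strict transform $\widetilde{Y}$ (a line bundle over the resolution $\widetilde{Z}$) is not an isomorphism over $X\setminus\{v\}$, so the log resolution of $X$ in $V$ must blow up along the $\bG_m$-orbit of $\Sing Z$ as well, producing exceptional components away from $v$. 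Second, even for components that do contract to $v$, being a $\bG_m$-variety over a fixed point only forces $H^0$ to sit in degree $0$; for $i\geq 1$, $H^i(W,\O_W)$ can a priori carry nonzero weights, and it is not automatic that the full snc-Mayer--Vietoris complex collapses in positive degree to the pure line-bundle cohomology you want. Some explicit control of the exceptional geometry is needed, and you supply none.

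The paper circumvents exactly this difficulty with a different mechanism, which is worth noting. It first performs only the Rees-algebra blow-up $\widetilde{Y}=\Proj S^\natural\to Y$, so the reduced preimage of $X$ decomposes concretely as $\overline{X}=\widetilde{X}\cup E$, where $\widetilde{X}$ is the (unresolved!) total space of $\mathscr{L}^{-1}$ over $Z$ and $E\cong\Proj S$ is a weighted projective space with $H^{i}(E,\O_E)=0$ for $i\geq 1$. Using the gluing property of Du Bois singularities, $\overline{X}$ is shown to be Du Bois (given $Z$ Du Bois), so the \emph{further} strong log resolution $Y_0\to\widetilde{Y}$ satisfies $\mathbf{R}f_*\O_{X_0}\simeq\O_{\overline{X}}$; the Leray spectral sequence then collapses and reduces everything to the explicit \v{C}ech/Mayer--Vietoris computation of $H^i(\overline{X},\O_{\overline{X}})\cong[H_\m^{i+1}(R)]_{>0}$ for $i\geq 1$. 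This intermediate step is precisely what eliminates the bookkeeping of the additional exceptional components that your proposal leaves unaddressed. You would need either to reproduce this intermediate Du Bois-ness argument for a suitable snc $\overline{X}$, or to give a genuine $\bG_m$-equivariant weight analysis of the full exceptional fiber, before your degree-by-degree collapse is justified.
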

\begin{proof}
If $R$ is Du Bois, so is $R_P$ for all homogeneous primes $P$. So both $(1)$ and $(2)$ imply $Z$ has Du Bois singularities. So without loss of generality we assume $Z$ is Du Bois. Since $\mathscr{L}$ is ample, we know that $R$ is a finitely generated $K$-algebra. We pick homogeneous elements $x_1,\dots,x_m$ in $R$ that form a set of algebra generators of $R$ over $K$. Let $x_j$ have degree $d_j>0$. We have a natural degree-preserving map $S=K[x_1,\dots,x_m]\twoheadrightarrow R$ where $S$ is the polynomial ring with a possibly non-standard grading. Let $X=\Spec R$, $Y=\Spec S$, we have $X\hookrightarrow Y\cong\mathbb{A}^m$.

Now we use $R^\natural$ and $S^\natural$ to denote the Rees algebra of $R$ and $S$ with respect to the natural filtration $R_{\geq t}$ and $S_{\geq t}$. That is, \[R^\natural=R\oplus R_{\geq 1}\oplus R_{\geq 2}\oplus\cdots,\] \[S^{\natural}=S\oplus S_{\geq 1}\oplus S_{\geq 2}\oplus\cdots.\] Let $\widetilde{X}=\Proj R^\natural$ and $\widetilde{Y}=\Proj S^\natural$. We have natural maps $\widetilde{X}\to X$ and $\widetilde{Y}\to Y$ induced by the inclusion $R\hookrightarrow R^\natural$ and $S\hookrightarrow S^\natural$. We note that when $x_1,\dots,x_m$ all have degree 1, $\widetilde{X}$ and $\widetilde{Y}$ are just the blow ups of $X$ and $Y$ at the homogeneous maximal ideals.

It is straightforward to check that the reduced pre-image of $X$ in $\widetilde{Y}$ is $\overline{X}=\widetilde{X}\bigcup E$ where $E\cong\Proj(S/S_{\geq 1}\oplus S_{\geq 1}/S_{\geq2}\oplus\cdots)\cong \Proj S$ is a weighted projective space. It is also clear that $\widetilde{X}\bigcap E\cong \Proj R=Z$.

We further let $Y_0\xrightarrow{f}\widetilde{Y}$ be a strong log resolution of the pair $(\widetilde{Y}, \overline{X})$ and we use $X_0$ to denote the reduced pre-image of $\overline{X}$ in $Y_0$. We summarize all the above information in the following diagram:
\[  \xymatrix{
    X_0 \ar@{^{(}->}[r]  \ar[d]_{f} &  Y_0   \ar[d]_{f} \ar@/^1pc/[dd]^{\pi}  \\
    \widetilde{X}\bigcup E=\overline{X}\ar@{^{(}->}[r]  \ar[d]_g &  \widetilde{Y} \ar[d]_g  \\
    X\ar@{^{(}->}[r] & Y 
} \]

First notice that we can also interpret $\widetilde{X}=\Proj R^\natural$ as the total space of the tautological line bundle $\mathscr{L}^{-1}$ on $Z$ (see Section 8.7.3 in \cite{EGA}). Hence $\widetilde{X}$ has Du Bois singularities since $Z$ has Du Bois singularities by assumption. Also notice that $E$ is isomorphic to a weighted projective space, so it has rational singularities and hence is Du Bois. Now since $\widetilde{X}$, $E$, and $\widetilde{X}\bigcap E\cong Z$ are all Du Bois, it follows that $\overline{X}$ is Du Bois because Du Bois singularities glue well (for example, see 3.8, 4.10 in \cite{DuBoisMain} or Lemma 3.4 in \cite{SchwedeEasyCharacterization}).

Second notice that we have a spectral sequence \[R^pg_*R^qf_*O_{X_0}\Rightarrow R^{p+q}\pi_*O_{X_0}.\] Since $Y_0\rightarrow \widetilde{Y}$ is a strong log resolution of $(\widetilde{Y}, \overline{X})$ and $\overline{X}$ is Du Bois, we know that $f_*O_{X_0}=O_{\overline{X}}$ and $R^{q}f_*O_{X_0}=0$ for $q>0$ by Definition \ref{Schwede's simple characterization of Du Bois}. So the above spectral sequence degenerates. We have
\begin{equation}\label{1}
R^i\pi_*O_{X_0}\cong R^ig_*O_{\overline{X}}.
\end{equation}

Now we compute $R^ig_*O_{\overline{X}}$. Since $X$ is affine, this is just $H^i(\overline{X}, O_{\overline{X}})$. Since $\overline{X}=\widetilde{X}\bigcup E$, $Z\cong \widetilde{X}\bigcap E$, we have an exact sequence \[0\rightarrow O_{\overline{X}}\rightarrow O_{\widetilde{X}}\oplus O_E\rightarrow O_Z\rightarrow 0.\] This induces a long exact sequence on cohomology:
\begin{equation}
\label{7}\cdots \rightarrow H^i(\overline{X}, O_{\overline{X}})\rightarrow H^i(\widetilde{X}, O_{\widetilde{X}})\oplus H^i(E, O_E)\rightarrow H^i(Z,O_Z)\rightarrow \cdots.
\end{equation}
We know that $H^i(E, O_E)=0$ for every $i\geq 1$ because $E$ is a weighted projective space. We also know $H^i(Z, O_Z)\cong [H^{i+1}_\m(R)]_0$ for every $i\geq 1$ because $Z\cong \Proj R$. Now we use \v{C}ech complex to understand the map $H^i(\widetilde{X}, O_{\widetilde{X}})\rightarrow H^i(Z,O_Z)$. Recall that $x_1,\dots,x_m$ are homogeneous algebra generators of $R$ over $K$ of degree $d_1,\dots,d_m$. The natural map $O_{\widetilde{X}}\rightarrow O_Z$ induces a map between the $s$-th spot of the \v{C}ech complexes of $O_{\widetilde{X}}$ and $O_Z$ with respect to the affine cover $\{D_+(x_i)\}_{1\leq i\leq m}$. This induced map on \v{C}ech complexes can be explicitly described as follows (all the direct sum in the following diagram is taking over all $s$-tuples $1\leq i_1<\cdots <i_s\leq m$):
\[  \xymatrix{
\oplus O_{\widetilde{X}}(D_+(x_{i_1}x_{i_2}\cdots x_{i_s}))  \ar[r] \ar[d]^{\cong} & \oplus O_Z(D_+(x_{i_1}x_{i_2}\cdots x_{i_s}))\ar[d]^{\cong}  \\
\D\oplus\{\frac{y}{(x_{i_1}x_{i_2}\cdots x_{i_s})^n}| n\geq 0, y\in R_{\geq nd}, d=d_{i_1}+\cdots+ d_{i_s}\} \ar[r] \ar[d]^{\cong} & \oplus[R_{x_{i_1}x_{i_2}\cdots x_{i_s}}]_0 \ar[d]^\cong \\
\oplus[R_{x_{i_1}x_{i_2}\cdots x_{i_s}}]_{\geq 0} \ar[r]^\phi & \oplus[R_{x_{i_1}x_{i_2}\cdots x_{i_s}}]_0   
} \]

It is straightforward to check that the induced map on the second line takes the element $\D\frac{y}{(x_{i_1}x_{i_2}\cdots x_{i_s})^n}$ to $\D\frac{\overline{y}}{(x_{i_1}x_{i_2}\cdots x_{i_s})^n}$ where $\overline{y}$ denotes the image of $y$ in $R_{\geq nd}/R_{\geq nd+1}$. Hence the same map $\phi$ on the third line is exactly ``taking the degree $0$ part". By the \v{C}ech complex computation of sheaf cohomology, we know that $H^i(\widetilde{X}, O_{\widetilde{X}})\cong [H^{i+1}_\m(R)]_{\geq 0}$ and the map $H^i(\widetilde{X}, O_{\widetilde{X}})\rightarrow H^i(Z, O_Z)$ is exactly the natural map $[H^{i+1}_\m(R)]_{\geq 0}\rightarrow [H^{i+1}_\m(R)]_{0}$ for every $i\geq 1$. Hence the above long exact sequence (\ref{7}) gives (for every $i\geq 1$)
\begin{equation}
\label{2}
H^i(\overline{X}, O_{\overline{X}})=[H_\m^{i+1}(R)]_{>0}.
\end{equation}
From (\ref{1}) and (\ref{2}) it is clear that, for every $i\geq 1$, we have
\begin{equation}\label{4}
R^i\pi_*O_{X_0}\cong[H_\m^{i+1}(R)]_{>0}.
\end{equation}

However, it is straightforward to check that $Y_0\xrightarrow{\pi}Y$ is also a strong log resolution of the pair $(Y, X)$ and $X_0$ is the reduced pre-image of $X$ in $Y_0$. Hence by Definition \ref{Schwede's simple characterization of Du Bois}, we know that $X$ has Du Bois singularities if and only if $\pi_*O_{X_0}=O_X$ and $R^i\pi_*O_{X_0}=0$ for $i\geq 1$. But since $R$ is a section ring of a normal variety, $R$ is normal. So $\pi_*O_{X_0}=O_X$ is always true by Corollary 5.7 in \cite{SchwedeFInjectiveAreDuBois}. So by (\ref{4}), $X=\Spec R$ has Du Bois singularities if and only if $[H_\m^i(R)]_{>0}=0$ for $i\geq 0$ (note that $H_\m^0(R)$ and $H_\m^1(R)$ vanish because $R$ is normal).
\end{proof}

We state and prove our main theorem in characteristic $0$.

\begin{theorem}
Let $Z$ be a normal projective variety over $K$ and let $\mathscr{L}$ be an ample line bundle on $Z$. Let $R=\oplus H^0(Z,\mathscr{L}^i)$ be the section ring of $Z$ with respect to $\mathscr{L}$ and $\m$ be the irrelevant maximal ideal of $R$. If $Z$ is Cohen-Macaulay and $R$ has Du Bois singularities, then $R$ is Buchsbaum.

In particular, when $(R,\m)$ is a normal standard graded $K$-algebra, if $R$ is Du Bois and $R$ has finite local cohomology, then $R$ is Buchsbaum.
\end{theorem}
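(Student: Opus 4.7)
The plan is to bootstrap the ``non-positive degree'' vanishing for $H_\m^i(R)$ from the positive-degree vanishing already provided by Theorem~\ref{characterization of Du Bois in graded case}, via Serre duality on $Z$ together with the Kodaira-type vanishing of Theorem~\ref{Kodaira vanishing for CM Du Bois}, and then feed the resulting concentration of the intermediate local cohomology modules in a single degree into Schenzel's graded Buchsbaum criterion (Theorem~\ref{sufficient condision for Buchsbaum ring in the graded case}).

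Since $R$ is Du Bois, Theorem~\ref{characterization of Du Bois in graded case} gives both that $Z$ has Du Bois singularities and that $[H_\m^i(R)]_{>0}=0$ for every $i\geq 0$. Because $Z$ is now normal, projective, Cohen--Macaulay, and Du Bois, Theorem~\ref{Kodaira vanishing for CM Du Bois} yields $H^k(Z, \omega_Z \otimes \mathscr{N})=0$ for every ample line bundle $\mathscr{N}$ and every $k>0$. Next I would invoke the standard isomorphism for section rings,
\[
[H_\m^i(R)]_j \;\cong\; H^{i-1}(Z, \mathscr{L}^j) \qquad (i \geq 2,\ j\in\mathbb{Z}),
\]
and apply Serre duality on the Cohen--Macaulay projective variety $Z$ of dimension $d=\dim R-1$:
\[
H^{i-1}(Z, \mathscr{L}^j) \;\cong\; H^{d-i+1}(Z,\, \omega_Z\otimes \mathscr{L}^{-j})^{\vee}.
\]
For $j<0$ the line bundle $\mathscr{L}^{-j}$ is ample, and for $2\leq i\leq d=\dim R-1$ the cohomological degree $d-i+1$ is strictly positive, so Kodaira vanishing kills the right hand side. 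Hence $[H_\m^i(R)]_{<0}=0$ for every $2\leq i\leq \dim R-1$.

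The boundary degrees are immediate: $R$ is normal (being the section ring of a normal variety), so $\depth R\geq 2$ whenever $\dim R\geq 2$, forcing $H_\m^0(R)=H_\m^1(R)=0$; the case $\dim R\leq 1$ makes $R$ Cohen--Macaulay and the conclusion is trivial. Combining this with the vanishing above, $[H_\m^i(R)]_s=0$ for every $s\neq 0$ and every $i<\dim R$, so Theorem~\ref{sufficient condision for Buchsbaum ring in the graded case} applied with $t=0$ concludes that $R$ is Buchsbaum. For the ``in particular'' assertion, set $Z=\Proj R$ and $\mathscr{L}=\O_Z(1)$; normality of $R$ identifies $R$ with its section ring $\bigoplus_{i\geq 0} H^0(Z,\O_Z(i))$, $\O_Z(1)$ is ample since $R$ is standard graded, and the finite local cohomology hypothesis makes $R$ Cohen--Macaulay on the punctured spectrum, hence $Z$ is Cohen--Macaulay.

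The main obstacle I anticipate is purely the bookkeeping on the vanishing ranges: verifying that $2\leq i\leq \dim R-1$ is exactly the range Schenzel's criterion requires and exactly the range Kodaira provides on the dual side, and, in the ``in particular'' reduction, making sure that normality really identifies the given standard graded $R$ with the section ring of $\O_{\Proj R}(1)$. Nothing deeper is needed once Theorems~\ref{characterization of Du Bois in graded case}, \ref{Kodaira vanishing for CM Du Bois}, and \ref{sufficient condision for Buchsbaum ring in the graded case} are in hand.
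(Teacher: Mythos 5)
Your proof takes essentially the same route as the paper: Theorem~\ref{characterization of Du Bois in graded case} kills the positive graded pieces, Serre duality on the Cohen--Macaulay variety $Z$ combined with Theorem~\ref{Kodaira vanishing for CM Du Bois} kills the negative ones, normality handles $i=0,1$, and Theorem~\ref{sufficient condision for Buchsbaum ring in the graded case} with $t=0$ finishes; the ``in particular'' reduction via $Z=\Proj R$, $\mathscr{L}=\O_Z(1)$ is likewise the paper's. The index bookkeeping you flag as the potential obstacle checks out exactly as you compute it.
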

\begin{proof}
The last assertion is clear because every normal standard graded $K$-algebra is the section ring of a normal projective variety $Z\cong\Proj R$ with respect to some very ample line bundle $\mathscr{L}$, and the fact that $R$ has finite local cohomology implies $R$ is Cohen-Macaulay on the punctured spectrum, so it implies $Z$ is Cohen-Macaulay. Therefore it suffices to prove the first assertion.

Following Theorem \ref{sufficient condision for Buchsbaum ring in the graded case}, we will show that $[H_\m^i(R)]_s=0$ for every $s\neq 0$ and $i<n=\dim R$. Since $R$ is normal, $H_\m^i(R)=0$ for $i=0,1$ so there's nothing prove in these cases. Now for $2\leq i<n$, we will show $[H_\m^i(R)]_s=0$ for $s>0$ and $s<0$.

For $s>0$, this follows immediately from Theorem \ref{characterization of Du Bois in graded case} since $R$ has Du Bois singularities. For $s<0$, we want to show $[H_\m^{i+1}(R)]_s=H^i(Z,\mathscr{L}^s)=0$ for every $i<n-1$. Since $Z=\Proj R$ is Cohen-Macaulay and Du Bois of dimension $n-1$, by Serre duality, \[H^i(Z, \mathscr{L}^s)\cong H^{n-1-i}(Z, \omega_Z\otimes\mathscr{L}^{-s})=0\] where the last equality is by Theorem \ref{Kodaira vanishing for CM Du Bois} since $n-1-i>0$.
\end{proof}

In \cite{SchwedeFInjectiveAreDuBois}, the following conjecture was made:
\begin{conjecture}[{\it cf.} Question 8.1 in \cite{SchwedeFInjectiveAreDuBois} or Conjecture 4.1 in \cite{BhattSchwedeTakagiweakordinaryconjectureandFsingularity}]
\label{conjecture DB=F-injectivetype}
Let $X$ be a reduced scheme of finite type over an algebraically closed field of characteristic 0. Then $X$ has Du Bois singularities if and only if $X$ is of dense $F$-injective type.
\end{conjecture}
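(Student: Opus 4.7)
The plan is to separate the equivalence into its two directions. The implication \emph{dense $F$-injective type} $\Rightarrow$ \emph{Du Bois} is already a theorem of Schwede \cite{SchwedeFInjectiveAreDuBois}; its proof reduces the criterion of Definition \ref{Schwede's simple characterization of Du Bois} modulo $p$ and uses that on the reductions $X_\mu$, the hypothesis of $F$-injectivity forces the mod-$p$ avatar of the map $\mathcal{O}_{X_\mu} \to \mathbf{R}\pi_{\mu *}\mathcal{O}_{E_\mu}$ to be a quasi-isomorphism, which ascends back to characteristic zero. I would therefore concentrate my efforts entirely on the converse: given $X$ Du Bois over a field of characteristic zero, produce a dense set of primes for which the mod-$p$ reduction $X_\mu$ is $F$-injective.

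For the converse I would first spread out. Fix an embedding $X \hookrightarrow Y$ with $Y$ smooth and a strong log resolution $\pi\colon \widetilde{Y} \to Y$ of $X$, with reduced preimage $E$; these spread over a finitely generated $\mathbb{Z}$-subalgebra $A$ of the base field to a family $X_A \hookrightarrow Y_A$ together with $\pi_A\colon \widetilde{Y}_A \to Y_A$ and $E_A$. After shrinking $A$, the Du Bois quasi-isomorphism $\mathcal{O}_X \to \mathbf{R}\pi_* \mathcal{O}_E$ specializes, giving $\mathcal{O}_{X_\mu} \simeq \mathbf{R}\pi_{\mu *}\mathcal{O}_{E_\mu}$ for every closed $\mu \in \Spec A$. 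The strategy is then to transport the Frobenius action on $H^i_\mathfrak{m}(X_\mu)$ across this quasi-isomorphism to a Frobenius action on the hypercohomology $\mathbb{H}^i_\mathfrak{m}(\mathbf{R}\pi_{\mu *}\mathcal{O}_{E_\mu})$, and to deduce injectivity of the former from injectivity of the latter. Because $E_\mu$ is a simple normal crossings divisor in a smooth variety, the latter decomposes via the Mayer--Vietoris spectral sequence of the strata of $E_\mu$ into Frobenius actions on $H^j(W_\mu, \mathcal{O}_{W_\mu})$ for various smooth projective strata $W_\mu$.

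The main obstacle, and the reason this statement remains an open conjecture, is precisely that injectivity of Frobenius on these $H^j(W_\mu, \mathcal{O}_{W_\mu})$ is not automatic: it is the content of the \emph{weak ordinarity conjecture} of Musta\c{t}\u{a}--Srinivas, which asserts that for a smooth projective variety $W$ defined over a number field, the set of primes $\mu$ for which Frobenius is bijective on $H^j(W_\mu, \mathcal{O}_{W_\mu})$ has density one. Granting weak ordinarity for all strata of $E$ simultaneously (finite in number), one can intersect the density-one sets and feed the outcome into the Mayer--Vietoris machine of \cite{BhattSchwedeTakagiweakordinaryconjectureandFsingularity} to obtain $F$-injectivity of $X_\mu$ for a dense set of $\mu$. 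My proposal is therefore a \emph{conditional} proof modulo weak ordinarity: spread out the Du Bois data, invoke weak ordinarity stratum by stratum to control the Frobenius action on each $H^j(W_\mu, \mathcal{O}_{W_\mu})$, and glue via Mayer--Vietoris to conclude that Frobenius acts injectively on every $H^i_\mathfrak{m}(X_\mu)$. A genuinely unconditional proof would require either establishing weak ordinarity in the necessary generality---itself a notoriously difficult arithmetic statement going back to conjectures on Newton versus Hodge polygons---or a substantially new $p$-adic technique relating the Du Bois complex directly to Frobenius in mixed characteristic.
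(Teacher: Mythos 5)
This statement is presented in the paper as an open \emph{conjecture}, not a theorem, so there is no proof of it in the paper to compare against. Your analysis is nonetheless accurate and matches the paper's own remarks immediately following the conjecture: the ``if'' direction (dense $F$-injective type implies Du Bois) is the main theorem of \cite{SchwedeFInjectiveAreDuBois}, and the ``only if'' direction is open and was shown by Bhatt--Schwede--Takagi in \cite{BhattSchwedeTakagiweakordinaryconjectureandFsingularity} to be equivalent to the weak ordinarity conjecture of Musta\c{t}\u{a}--Srinivas. Your sketch of the reduction---spreading out the log resolution, specializing the Du Bois quasi-isomorphism $\mathcal{O}_X \to \mathbf{R}\pi_* \mathcal{O}_E$, decomposing Frobenius on the strata via Mayer--Vietoris, and invoking weak ordinarity stratum by stratum---is precisely the mechanism underlying the Bhatt--Schwede--Takagi equivalence, and you are right to flag that the argument is conditional. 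The only small caution worth recording: transporting the Frobenius action across the quasi-isomorphism and through the Mayer--Vietoris spectral sequence requires some care with compatibilities (the Frobenius does not act on the characteristic-zero object, only after reduction), and the bookkeeping of exactly which Frobenius-stable pieces survive is where most of the work in \cite{BhattSchwedeTakagiweakordinaryconjectureandFsingularity} lives; but as a conditional outline your proposal is sound and correctly locates the genuine obstruction in weak ordinarity rather than in the algebro-geometric scaffolding.
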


We note that the ``if" direction of the above conjecture is true by the main result in \cite{SchwedeFInjectiveAreDuBois}. And by a recent result of Bhatt, Schwede and Takagi \cite{BhattSchwedeTakagiweakordinaryconjectureandFsingularity}, this conjecture is equivalent to the weak ordinarity conjecture of Musta\c{t}\u{a} and Srinivas \cite{MustataSrinivasweakordinaryconjecture}. The point we want to make here is that under mild conditions on $R$, a positive answer to Conjecture \ref{conjecture DB=F-injectivetype} will give a positive answer to Question \ref{Takagi's question in char 0} by the standard method of reduction mod $p$.

\begin{proposition}
Suppose Conjecture \ref{conjecture DB=F-injectivetype} holds. Let $R$ be a ring finitely generated over an algebraically closed field $K$ of characteristic $0$. Suppose $R$ is equidimensional and $\m$ is the only non-Cohen-Macaulay (closed) point of $\Spec R$. If $R$ is Du Bois, then $R_\m$ is Buchsbaum.
\end{proposition}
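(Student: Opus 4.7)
The plan is a standard reduction-modulo-$p$ argument: on a dense open of a spreading-out of $R$, Conjecture \ref{conjecture DB=F-injectivetype} will produce $F$-injective fibers that are Buchsbaum by Corollary \ref{main corollary 2}, and the Buchsbaum property in characteristic zero will be extracted from a single equation between lengths and multiplicities.

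First I would observe that the hypotheses imply $R_\m$ has finite local cohomology: $R$ is equidimensional and Cohen--Macaulay on $\Spec R\setminus\{\m\}$, so the same holds for $R_\m$. Set $n=\dim R_\m$ and
\[
C = \sum_{i=1}^{n-1}\binom{n-1}{i}\, l\bigl(H_\m^i(R_\m)\bigr).
\]
Theorem \ref{FLCandd-sequence}(4) gives $l(R_\m/\underline{x}) - e(\underline{x}, R_\m)\leq C$ for every system of parameters $\underline{x}$ of $R_\m$, and characterization (3) of Definition \ref{definition of Buchsbaum ring} says $R_\m$ is Buchsbaum exactly when this inequality is an equality for every such $\underline{x}$. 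Thus it suffices to fix an arbitrary system of parameters $\underline{x}=x_1,\ldots,x_n$ of $R_\m$, which I may take to lie in $R$, and produce $l(R_\m/\underline{x}) - e(\underline{x}, R_\m)=C$ for this one $\underline{x}$.

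I would then spread out: write $R=R_A\otimes_A K$ for a finitely generated $\Z$-subalgebra $A\subseteq K$ with $R_A$ finitely generated over $A$, and lifts $x_i\in R_A$, $\m_A\subseteq R_A$. Applying generic freeness to the Koszul and \v{C}ech complexes that compute the relevant lengths, multiplicities, and local cohomology modules, after shrinking $\Spec A$ to a dense open $U$ I would arrange, for every closed point $\bq\in U$: $\m_\bq$ is maximal in $R_\bq$, $\underline{x}_\bq$ is a system of parameters of $(R_\bq)_{\m_\bq}$, the local ring $(R_\bq)_{\m_\bq}$ has finite local cohomology with $l(H_{\m_\bq}^i((R_\bq)_{\m_\bq})) = l(H_\m^i(R_\m))$ for all $i<n$, and $l((R_\bq)_{\m_\bq}/\underline{x}_\bq) = l(R_\m/\underline{x})$ and $e(\underline{x}_\bq,(R_\bq)_{\m_\bq}) = e(\underline{x}, R_\m)$. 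The assumed Conjecture \ref{conjecture DB=F-injectivetype} together with the Du Bois hypothesis on $R$ tells me $R$ has dense $F$-injective type, so I can find a closed point $\bq\in U$ for which $(R_\bq)_{\m_\bq}$ is $F$-injective. Corollary \ref{main corollary 2} then forces $(R_\bq)_{\m_\bq}$ to be Buchsbaum, so
\[
l\bigl((R_\bq)_{\m_\bq}/\underline{x}_\bq\bigr) - e\bigl(\underline{x}_\bq,(R_\bq)_{\m_\bq}\bigr) = \sum_{i=1}^{n-1}\binom{n-1}{i}\, l\bigl(H_{\m_\bq}^i((R_\bq)_{\m_\bq})\bigr),
\]
and the preservation properties transport this back to $l(R_\m/\underline{x}) - e(\underline{x}, R_\m) = C$, as required.

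The main technical obstacle is the spreading-out step: one must verify that lengths of local cohomology modules, lengths of quotients by a fixed parameter ideal, and Hilbert--Samuel multiplicities are all simultaneously preserved on a single dense open of $\Spec A$. This is the standard template used, for instance, in the tight-closure reduction-mod-$p$ machinery, but it is genuinely the bulk of the work; once it is in place, invoking dense $F$-injective type and applying Corollary \ref{main corollary 2} fiberwise, then running the argument for every $\underline{x}$, is immediate.
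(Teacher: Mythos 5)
Your overall strategy (spread out, extract $F$-injective fibers, apply Corollary~\ref{main corollary 2}, transport the Buchsbaum conclusion back) is the same as the paper's, but your reduction to a single numerical equality $l(R_\m/\underline{x})-e(\underline{x},R_\m)=C$ forces you to preserve $l\bigl(H_\m^i(R_\m)\bigr)$ under reduction mod $p$, and this is where the proposal has a genuine gap. ``Applying generic freeness to the \v{C}ech complex'' is not a valid step: the modules in the \v{C}ech complex are localizations, hence not finitely generated over $A$, so generic freeness simply does not apply to them, and the standard tight-closure spreading-out template controls Koszul/finitely generated data, not local cohomology lengths. Controlling the lengths of local cohomology modules under reduction mod $p$ is precisely the kind of delicate issue that sits near the weak ordinarity circle of ideas and cannot be waved through as routine. (It can plausibly be rescued via local duality — replacing $H_\m^i(R_\m)$ by the finitely generated $\Ext$ modules against a dualizing complex, to which generic freeness does apply — but that is a different argument and would need to be carried out.)

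The paper sidesteps this entirely by arguing by contradiction with \emph{two} systems of parameters: if $R_\m$ were not Buchsbaum there would exist $\underline{x},\underline{y}$ with $l(R_\m/(\underline{x}))-e(\underline{x},R_\m)\neq l(R_\m/(\underline{y}))-e(\underline{y},R_\m)$, and each such difference is the Euler characteristic $\chi_1(\underline{x},R_\m)=\sum_{i\geq 1}(-1)^i\,l\bigl(H_i(\underline{x},R)_\m\bigr)$ of the (finitely generated, bounded) Koszul homology. After localizing to make the Koszul homologies supported only at $\m$ and applying generic freeness to the Koszul complex (which \emph{is} a complex of finitely generated $A$-modules), the lengths become $A$-ranks and the strict inequality between the two $\chi_1$'s is preserved in every fiber. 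Thus the fiber is not Buchsbaum, contradicting the Buchsbaumness forced by the conjecture and Corollary~\ref{main corollary 2}. This avoids any need to compare $H_\m^i$ of $R_\m$ with $H_{\m_s}^i$ of the fiber. You should either adopt this two-parameter contradiction, or, if you want to keep the direct ``$=C$'' argument, replace the \v{C}ech-complex step with a local-duality argument that reduces to finitely generated $\Ext$ modules and justify why their lengths are preserved.
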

\begin{proof}
Let $R$ be generated over $K$ by $z_1,\dots,z_k$, i.e., $R=K[z_1,\dots,z_k]/J$. Since $R$ is an affine algebra over an algebraically closed field $K$, without loss of generality we may assume $\m=(z_1,\dots,z_k)$ is the isolated non-Cohen-Macaulay point. If $R_\m$ is not Buchsbaum, then there exists system of parameters $\underline{x}=x_1,\dots,x_n$, $\underline{y}=y_1,\dots,y_n$ of $R_\m$ such that \[l(R_\m/(\underline{x}))-e(\underline{x}, R_\m)\neq l(R_\m/(\underline{y}))-e(\underline{x}, R_\m).\]
And we can certainly assume $\underline{x}$, $\underline{y}$ are actually elements of $R$. We note that $l(R_\m/(\underline{x}))-e(\underline{x}, R_\m)=\chi_1(\underline{x}, R_\m)=\sum_{i=1}^n (-1)^il(H_i(\underline{x}, R)_\m)$. Obviously, the support of all $H_i(\underline{x}, R)$, $H_i(\underline{y}, R)$ consist only of finitely many maximal ideals $\m=\m_0, \m_1,\dots,\m_t$ of $R$. Now we pick $f\notin \m$ but $f\in \m_i$ for every $i\geq 1$ and we localize at $f$. Notice that all the hypothesis on $R$ are unchanged. But now each $H_i(\underline{x}, R)$, $H_i(\underline{y}, R)$ is only supported at $\m$. In particular we know that
\begin{equation*}
\sum_{i=1}^n(-1)^i l(H_i(\underline{x}, R))=l(R_\m/(\underline{x}))-e(\underline{x}, R_\m)\neq l(R_\m/(\underline{y}))-e(\underline{x}, R_\m)= \sum_{i=1}^n (-1)^il(H_i(\underline{y}, R)).
\end{equation*}
Now we take a finitely generated $\mathbb{Z}$-algebra $A\subseteq K$ such that $R$ is well-defined over $A$ and let $R_A$ be the corresponding subring of $R$ so that $R=R_A\otimes_AK$. By generic freeness, we can shrink $A$ and make all the kernels, cokernels and homologies in the Koszul complex \[0\rightarrow R_A\to R_A^n\to\cdots\to R_A^n\xrightarrow{\underline{x}} R_A\to 0\] to be free as $A$-modules (and similar to $\underline{y}$). In particular, we have $H_i(\underline{x}, R)=H_i(\underline{x}, R_A)\otimes_AK$, $H_i(\underline{y}, R)=H_i(\underline{y}, R_A)\otimes_AK$ and all $H_i(\underline{x}, R_A)$, $H_i(\underline{y}, R_A)$ are free $A$-modules. In particular, we know that $l(H_i(\underline{x}, R))=\rank_A(H_i(\underline{x}, R_A))$ and $l(H_i(\underline{y}, R))=\rank_A(H_i(\underline{y}, R_A))$. Hence we have
\begin{eqnarray*}
&&\sum_{i=1}^n(-1)^i\rank_A(H_i(\underline{x}, R_A))=\sum_{i=1}^n(-1)^i l(H_i(\underline{x}, R))\\
&\neq& \sum_{i=1}^n(-1)^i l(H_i(\underline{y}, R))=\sum_{i=1}^n(-1)^i\rank_A(H_i(\underline{y}, R_A)).
\end{eqnarray*}
Now we pass to closed point of $A$, i.e., tensoring with $A/s$ for $s$ a maximal ideal of $A$. Let $R_s=R_A\otimes_AA/s$ and let $\m_s$ be the maximal ideal of $R_s$ corresponds to $\m$. Notice that when we pass to $R_s$, the above will give us \[\sum_{i=1}^n(-1)^i l(H_i(\underline{x}, R_s))\neq \sum_{i=1}^n(-1)^i l(H_i(\underline{y}, R_s))\] because $H_i(\underline{x}, R_A)$ and $H_i(\underline{y}, R_A)$ are free over $A$. In particular, this is saying that $(R_s)_{\m_s}$ is {\it not} Buchsbaum.

On the other hand, we can shrink $A$ to make $A$ regular and each $R_A[\frac{1}{z_i}]$ free over $A$ by generic freeness. Let $L$ be the fraction field of $A$. We know that \[L\otimes_AR_A[\frac{1}{z_i}]\to K\otimes_AR_A[\frac{1}{z_i}]=R[\frac{1}{z_i}]\] is faithfully flat with $0$-dimensional fiber. Since $(z_1,\dots,z_k)$ is the isolated non-Cohen-Macaulay point, each $R[\frac{1}{z_i}]$ is Cohen-Macaulay. It follows that each $L\otimes_AR_A[\frac{1}{z_i}]$ is Cohen-Macaulay. So we can further shrink $A$ such that each $R_A[\frac{1}{z_i}]$ is Cohen-Macaulay. Now since $A$ is regular and $A\to R_A[\frac{1}{z_i}]$ is faithfully flat, it is easy to see that after tensoring with $A/s$ for each maximal ideal $s$ of $A$, the resulting $R_s[\frac{1}{z_i}]$ is still Cohen-Macaulay for each $i$. Hence we may assume that after we pass to each closed point of $A$ (i.e., after we do the mod $p$ reduction), each $R_s$ still has an isolated non-Cohen-Macaulay point $\m_s$. And a similar argument shows that we may also assume each $R_s$ is equidimensional. Now if Conjecture \ref{conjecture DB=F-injectivetype} is true, then there should be a dense set of $s\in \Spec A$ such that $R_s$ is $F$-injective, equidimensional with an isolated non-Cohen-Macaulay point $\m_s$. So by Corollary \ref{main corollary 2}, $(R_s)_{\m_s}$ is Buchsbaum for a dense set of $s$, this is a contradiction.
\end{proof}

\section*{Acknowledgement}
I would like to thank Mel Hochster for many helpful and valuable discussions on this problem. I would like to thank Karl Schwede for answering many of my questions, for carefully reading a preliminary version this manuscript and for his comments. I am grateful to Kazuma Shimomoto for some helpful discussions on Buchsbaum rings, and to Zhixian Zhu for some discussions on Du Bois singularities. I would also like to thank the anonymous referee, whose comments helped improve the paper.

\bibliographystyle{skalpha}
\bibliography{CommonBib}

\end{document}